

\documentclass[final,3p,times]{elsarticle}

\usepackage{graphicx} 

\usepackage{amsmath, amssymb, latexsym, amsthm, amscd, mathrsfs, stmaryrd}
\SetSymbolFont{stmry}{bold}{U}{stmry}{m}{n}
\usepackage{amsfonts, mathrsfs}
\usepackage{tikz}
\usetikzlibrary{arrows}
\usetikzlibrary{calc}
\usepackage{tikz-cd}
\usepackage{multirow}
\usepackage{yhmath}
\usepackage{mathtools}

\usepackage{tabularx}  
\usepackage{array}
\usepackage{float} 
\usepackage{bbding}
\pagestyle{empty}
\usepackage[misc]{ifsym}

%

\newcommand{\A}{\mathcal{A}}

\newcommand{\f}{\frac{1}{2}} 
   
\newcommand{\pa}{\partial_t}   

\newcommand{\3}[1]{\interleave #1 \interleave}
\newcommand{\n}[1]{\left\lVert #1 \right\rVert} 
\newcommand{\inner}[1]{\left \langle #1 \right \rangle} 
\newcommand*{\dif}{\mathop{}\!\mathrm{d}}
\usepackage{amssymb}
\usepackage{amsmath}


\newtheoremstyle{noparens}%
  {}{}%
  {\itshape}{}%
  {\bfseries}{.}%
  { }%
  {\thmname{#1}\thmnumber{ #2}\mdseries\thmnote{ #3}}
\theoremstyle{noparens}
\newtheorem{example}{Example}
\newtheorem{remark}[subsubsection]{Remark}

\newtheorem{theorem}{Theorem}[section]
\newtheorem{lemma}{Lemma}[section]


\begin{document}

\begin{frontmatter}



\title{A Family of Block-Centered Schemes for Contaminant Transport Equations with Adsorption via Integral Method with Variational Limit}


            
\author[hrbeu]{He Liu}
\ead{liuhehe@hrbeu.edu.cn}

\author[hrbeu]{Xiongbo Zheng\corref{cor1}}
\ead{zhengxiongbo@hrbeu.edu.cn}

\author[hrbeu]{Xiaole Li}
\ead{lixiaole@hrbeu.edu.cn}

\author[hrbeu]{Mingze Ji}
\ead{jmzheu@163.com}

\cortext[cor1]{ Corresponding author.}
\affiliation[hrbeu]{
  organization={School of Mathematical Sciences, Harbin Engineering University},
  addressline={Nantong Street 145},
  city={Harbin},
  postcode={150001},
  state={Heilongjiang},
  country={China}
}
\begin{abstract}
This paper develops a class of high-order conservative schemes for contaminant transport with equilibrium adsorption, based on the Integral Method with Variational Limit on block-centered grids. By incorporating four parameters, the scheme can reproduce classical fourth-order compact schemes and further extend to sixth- and eighth-order accurate formulations, all within a unified framework. Under periodic boundary conditions, we analyze the stability, convergence, and mass conservation of the parameterized numerical scheme. Numerical experiments are then conducted to examine the impact of parameter variations on errors, explore the relationship between parameters and the fourth-, sixth-, and eighth-order schemes, and verify that the schemes’ high-order accuracy aligns with theoretical predictions. To enhance the applicability of the proposed method, we further develop two fourth-order compact boundary treatments that ensure uniform accuracy between boundary and interior regions. Numerical results confirm the effectiveness of the proposed schemes across various adsorption models
\end{abstract}

\begin{keyword}
Contaminant transport\sep Integral method with variational limit \sep Block-centered grid \sep Convergence \sep Stability
\end{keyword}
\end{frontmatter}



\section{Introduction}
\label{intro}
Contaminant transport with adsorption in porous media is a fundamental problem in environmental science and energy engineering, with wide applications in groundwater remediation~\cite{bear2013dynamics,du2010efficient,miller1998multiphase}, enhanced oil recovery~\cite{alt1985nonsteady,aziz1979petroleum}, and soil pollution control~\cite{amacher1988kinetics}. These processes feature a complex interplay between transport and adsorption, necessitating high-resolution numerical schemes for their reliable and accurate simulation.
\par
Among various numerical methods, the block-centered finite difference (BCFD) method~\cite{aziz1979petroleum,bear2012hydraulics,peaceman2000fundamentals} is widely adopted due to its capability to approximate both primary variables and fluxes simultaneously. It is regarded as a mixed finite element method \cite{raviart2006mixed}. Wheeler and Weiser \cite{weiser1988convergence} analyzed it and proved discrete \(L^2\) error estimates and second-order convergence on nonuniform grids for linear elliptic problems. Mehl et al. \cite{mehl2002development} applied BCFD with local mesh refinement to groundwater models. Li and Rui \cite{li2016two} developed a two-grid BCFD method for nonlinear non-Fickian transport, solving the nonlinear problem on a coarse grid and the linear one on a fine grid, achieving an error bound of $\mathcal{O}(t+h^2+H^3)$. Despite these advances, the classical BCFD schemes remain second-order accurate~\cite{arbogast1997mixed,liang2020conservative,rui2012block,rui2013block}, which limits their effectiveness in complex scenarios that require high-order accuracy.
\par
To improve accuracy, compact difference schemes~\cite{hirsh1975higher,lele1991compact} have gained attention for their high-order approximations with compact stencils. Some works combine compact schemes with BCFD to achieve both high accuracy and flux consistency. For example, Shi et al.~\cite{shi2022fourth} proposed a fourth-order compact BCFD method for nonlinear adsorption models, enabling simultaneous high-order approximation of both variables and fluxes. Ma et al.~\cite{ma2023fast} proposed a fourth-order compact BCFD scheme for variable-coefficient reaction-diffusion equations and verified its robustness under initial weak singularities.
These studies show that high-order methods combining compact schemes with block-centered conservative structures are theoretically reliable. 
However, most high-order BCFD schemes are designed for specific equations, resulting in a lack of structural uniformity and limited extensibility.
Therefore, how to construct a unified and tunable high-order scheme that works across multiple models is still under active investigation.
\par
To address the aforementioned challenges, this paper focuses on contaminant transport equations with adsorption and develops a family of unified high-order conservative schemes based on the Integral Method with Variational Limit (IMVL). 
The proposed approach delivers high-order accuracy for both variables and fluxes on block-centered grids.
Four adjustable parameters are introduced to flexibly generate numerical schemes of different orders while maintaining a unified discrete structure. Through parameter tuning, the method can reproduce classical fourth-order compact schemes and further construct a variety of high-order schemes of fourth, sixth, and eighth order. It provides various scheme options suitable for linear adsorption, Langmuir adsorption, and Freundlich adsorption models, effectively balancing model differences with structural consistency.
Furthermore, this paper investigates the impact of parameter choices on the accuracy of the schemes, revealing their relation to leading truncation errors, which provides guidance for parameter optimization and selection. To enhance boundary adaptability, Dirichlet boundary treatments are developed for two representative fourth-order compact schemes, ensuring uniform accuracy between interior and boundary points and preserving overall high-order convergence.
\par
In the theoretical analysis, this paper does not fix the parameters at specific values. Instead, it rigorously establishes convergence, stability, and mass conservation based on the general form of the scheme under periodic boundary conditions, ensuring that schemes with varying parameters retain consistent numerical properties.
In numerical validation, we first examine parameter effects on the error under periodic boundaries and find that the \(L^2\)-norm error is minimized when the scheme achieves eighth-order accuracy, consistent with the theory. Subsequently, numerical tests on fourth-, sixth-, and eighth-order schemes under various adsorption models both confirm the theoretical accuracy of the method and validate its mass conservation properties. Finally, numerical experiments of the two fourth-order schemes apply to three adsorption models under Dirichlet boundary conditions confirm that the schemes achieve fourth-order convergence, consistent with theoretical predictions.
In summary, this work proposes a unified framework with structural consistency, enabling the construction of high-order schemes across multiple models.
\par
The structure of the paper is as follows. Section 2 introduces the one-dimensional contaminant transport model with three adsorption cases and regularity assumptions. Section 3 presents the Integral Method with Variational Limit and constructs high-order schemes for both periodic and Dirichlet boundaries. Section 4 analyzes mass conservation, stability, and convergence under periodic conditions. Section 5 validates the method through numerical experiments, demonstrating its high accuracy.
\section{Mathematical Model}
\label{sec:1}
The contaminant transport in porous media is modeled by a mass-conservation equation that incorporates advection, molecular diffusion, mechanical dispersion, and chemical adsorption. The resulting one-dimensional governing equation is
\begin{equation} \label{eq-222}
c_t + \frac{\rho_b}{n}s_t + (uc - Dc_x)_x = f(x,t), \quad (x,t) \in (0, x_R) \times (0, T],
\end{equation}
where \( c(x,t) \) and \( s(x,t) \) denote the solute concentrations in the fluid and solid phases, respectively. The constants \( \rho_b \) and \( n \) represent the bulk density of the porous medium and its porosity. The fluid velocity is denoted by \( u(x) \), \( D(x) \) is the hydrodynamic dispersion coefficient, and \( f(x,t) \) is a source or sink term.
\par
The adsorption process can be described under either nonequilibrium or equilibrium assumptions. In nonequilibrium settings, the rate of adsorption is governed by first-order kinetics:
\begin{equation} \label{eq-ss}
\frac{\partial s}{\partial t} = k\left(\varphi(c) - s\right),
\end{equation}
where \( k \) is a reaction rate constant and \( \varphi(c) \) denotes the equilibrium sorbed concentration corresponding to the fluid concentration \( c \) according to the adsorption isotherm. Under equilibrium conditions, the solid-phase concentration \( s \) instantaneously attains equilibrium with the fluid-phase concentration \( c \), satisfying:
\[
    s = \varphi(c),
\] 
where \( \varphi(c) \) is specified by the chosen adsorption isotherm.

The most basic model is the linear isotherm, which assumes that the adsorbed concentration is directly proportional to the solute concentration\cite{seidel2004experimental}:
\[
\varphi(c) = K_d c,
\]
where \( K_d \) denotes the linear adsorption coefficient. This model is commonly applied to dilute systems where the solid-phase adsorption sites are far from saturation.

To capture nonlinear saturation effects, the Langmuir isotherm is often employed. It assumes monolayer adsorption on a homogeneous surface with a finite number of adsorption sites\cite{dawson1998analysis}:
\[
\varphi(c) = \frac{K_L S_m c}{1 + K_L c},
\]
where \( K_L \) is the Langmuir constant indicating the adsorption affinity, and \( S_m \) is the maximum adsorption capacity corresponding to full surface coverage.

For heterogeneous porous materials or non-ideal adsorption conditions, the Freundlich isotherm offers a more flexible empirical model\cite{solute}:
\[
\varphi(c) = K_F c^{\alpha}, \quad K_F > 0,\ \alpha > 0,
\]
where \( K_F \) is a proportionality constant and \( \alpha \) is a dimensionless exponent characterizing sorption intensity and nonlinearity. 
Notably, for the Freundlich isotherm with \(0 < \alpha < 1\), the derivative \(\varphi'(c)\) becomes unbounded as \(c \to 0\), leading to degeneracy.

In this work, we assume equilibrium adsorption governed by these isotherms. Furthermore, with constant \( \rho_b \) and \( n \), the time derivative of the adsorption term can be expressed as
\[
\frac{\rho_b}{n} s_t = \phi(c)_t, \quad \text{with} \quad \phi(c) := \frac{\rho_b}{n} \varphi(c).
\]
The resulting governing equation becomes
\begin{equation} \label{eq-3}
c_t + \phi(c)_t + (uc - Dc_x)_x = f(x,t), \quad (x,t) \in (0, x_R) \times (0, T].
\end{equation}
For the governing equation \eqref{eq-3}, we impose the following assumptions:
\begin{itemize}
\item[\textbf{(H1)}] The function \( \phi \) is Hölder continuous and monotone increasing on \(\mathbb{R}\).
\item[\textbf{(H2)}] The initial data \( c_0(x) \) is continuous on \( (0, x_R) \) and satisfies \( c_0(x) \ge 0 \).
\item[\textbf{(H3)}] The functions \( u(x) \in C^1(\mathbb{R}) \) and \( D(x) \) are both periodic in \( x \), and there exist constants \( D_*, D^* > 0 \) such that \(  D_* \le D(x) \le D^* \).
\item[\textbf{(H4)}] Define \( \psi(c) := c + \phi(c) \). Assume \( \psi \in C(\mathbb{R}) \), \( \psi(0) = 0 \), and that \( \psi \) is strictly increasing and uniformly monotone in the sense that
\begin{align} \label{eq-PP}
\frac{\psi(a)-\psi(b)}{a-b} \geq 1, \quad \forall a, b \in \mathbb{R},\ a \ne b.
\end{align}
\end{itemize}

\section{IMVL-Based Scheme for Contaminant Transport with Adsorption}
\subsection{The Integral Method with Variational Limit}
The Integral Method with Variational Limit (IMVL) \cite{luo2017fourth} is a high-order, accurate, and conservative discretization method. This section presents its theoretical foundation and spatial discretization.
\par
Consider a uniform partition of the domain \( [x_L, x_R] \) with grid size \( h = (x_R - x_L)/J \), where \( J \in \mathbb{N}_+ \), and define the grid points as \( x_i = x_L + i h \) for \( i = 1, \dots, J \). The spatial discretization is carried out as follows:
\begin{equation*}
\mathop{\int}_{xx}f(x):=\int_{x_i}^{x_i+\varepsilon_2 }\dif x_b\int_{x_i-\varepsilon_1}^{x_i}\dif x_a\int_{x_a}^{x_b}f(x)\dif x. 
\end{equation*}
where \( \varepsilon_1 \) and \( \varepsilon_2 \) are integration parameters. The function \( f(x) \) is expanded in a Taylor series about the \( x_i \):
\begin{align*}
    f(x)=\sum_{k}\frac{f^{(k)}}{k!}(x-x_i)^k.
\end{align*}
Substituting this expansion into the integral and evaluating term by term yields a sequence of coefficients. For convenience, we define
\begin{align*}
    e_k:= \mathop{\int}_{xx} \frac{(x-x_{i})^{k}}{k!} = \frac{\varepsilon_{1}\varepsilon_{2}^{k+2} + \varepsilon_{2}(-\varepsilon_{1})^{k+2}}{(k+2)!}, \quad
    B_{k}:= \mathop{\int}_{xx} \frac{\lvert{x-x_i}\rvert^{k}}{k!}  =\frac{\varepsilon_{1}\varepsilon_{2}^{k+2}+\varepsilon_{2}\varepsilon_{1}^{k+2}}{(k+2)!},
\end{align*}
where $ k\in \mathbb{N} $, and we have Lemma \ref{lem-2.1}.
\begin{lemma}[\cite{luo2017fourth}]\label{lem-2.1}
 Suppose $ f(x)\in{C^{K+1}}[x_L, x_R]$, we have 
\begin{align*}
    \mathop{\int}_{xx}f(x)=\sum_{k=0}^{K} {e}_kf_i^{(k)}+R,
\end{align*}   
where $\lvert{R}\rvert\leq\underset{x\in{[x_L, x_R]}}{\mathrm{max}}\lvert f^{(K+1)}(x)\rvert B_{K+1}$ and $ f_i^{(k)}=\frac{\dif^{k}f}{\dif x^{k}}\rvert_{x=x_i}, k\in\mathbb{N} $. 
\end{lemma}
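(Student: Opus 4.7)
The plan is to apply Taylor's theorem with the Lagrange form of the remainder to $f$ about $x_i$ and then exploit the linearity of the triple integral operator $\mathop{\int}_{xx}$. Since $f \in C^{K+1}[x_L, x_R]$, one may write
$$f(x) = \sum_{k=0}^{K} \frac{f_i^{(k)}}{k!}(x-x_i)^k + \frac{f^{(K+1)}(\xi(x))}{(K+1)!}(x-x_i)^{K+1}$$
for some $\xi(x)$ between $x_i$ and $x$. Substituting this expansion into $\mathop{\int}_{xx} f(x)$ splits the result into a polynomial part, which by the definition of $e_k$ equals $\sum_{k=0}^{K} e_k f_i^{(k)}$, and a remainder $R$. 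Thus the proof reduces to two tasks: (i) verifying the explicit closed form of $e_k$ so the identification is rigorous, and (ii) bounding $|R|$ by $B_{K+1} \max |f^{(K+1)}|$.

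For step (i), I would evaluate the innermost integral as $\int_{x_a}^{x_b} (x-x_i)^k \dif x = [(x_b-x_i)^{k+1} - (x_a-x_i)^{k+1}]/(k+1)$, then integrate in $x_a$ over $[x_i - \varepsilon_1, x_i]$ and in $x_b$ over $[x_i, x_i + \varepsilon_2]$. A direct polynomial calculation reproduces $e_k = (\varepsilon_1 \varepsilon_2^{k+2} + \varepsilon_2(-\varepsilon_1)^{k+2})/(k+2)!$; this part is entirely routine.

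For step (ii), setting $M := \max_{x \in [x_L, x_R]} |f^{(K+1)}(x)|$ and pulling it outside gives $|R| \leq M \mathop{\int}_{xx} |x-x_i|^{K+1}/(K+1)!$, so it suffices to identify this last integral with $B_{K+1}$. Because $x_a \leq x_i \leq x_b$ throughout the domain of integration, I would split the inner integral at $x_i$: on $[x_a, x_i]$ one has $|x-x_i|^{K+1} = (x_i - x)^{K+1}$, and on $[x_i, x_b]$ one has $|x-x_i|^{K+1} = (x - x_i)^{K+1}$. Both pieces are manifestly nonnegative, and the inner integral evaluates to $[(x_i - x_a)^{K+2} + (x_b - x_i)^{K+2}]/(K+2)!$; iterated integration in $x_a$ and $x_b$ then yields exactly $B_{K+1}$.

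The main subtlety I expect is the parity bookkeeping in step (ii). When $K+1$ is odd, $(x-x_i)^{K+1}$ changes sign across $x_i$, so a naive signed estimate would produce a different combination of powers of $\varepsilon_1$ and $\varepsilon_2$ than the symmetric expression appearing in $B_{K+1}$. Splitting at $x_i$ and working with the absolute value from the outset sidesteps this issue and recovers the correct formula regardless of the parity of $K$; every other step is a direct polynomial evaluation.
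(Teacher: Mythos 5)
The paper does not prove this lemma at all --- it is imported verbatim from \cite{luo2017fourth} --- so there is no in-paper argument to compare against. Your proof is correct and is the natural one: Taylor expansion with remainder about $x_i$, linearity of the iterated integral to identify the polynomial part with $\sum_{k=0}^{K} e_k f_i^{(k)}$, and the triangle inequality (valid here because $x_a \le x_i \le x_b$ throughout the integration domain, so all three integrals have positive orientation) to reduce the remainder bound to evaluating $\mathop{\int}_{xx}\lvert x-x_i\rvert^{K+1}/(K+1)!$; your split of the inner integral at $x_i$ correctly produces $\bigl[(x_i-x_a)^{K+2}+(x_b-x_i)^{K+2}\bigr]/(K+2)!$ and hence exactly $B_{K+1}$, and you rightly flag that this splitting is what distinguishes $B_{K+1}$ from $e_{K+1}$ when $K+1$ is odd.
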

To remove the common factor introduced by the integral and preserve the structure of the numerical scheme, the expression is divided by
\( e_0 := \int_{xx} 1 \). Accordingly, we define
\begin{align*} 
    \tilde{e}_k:= \frac{e_k}{e_0}= \frac{2(\varepsilon_2^{k+1}-(-\varepsilon_1)^{k+1})}{(\varepsilon _1+\varepsilon_2)(k+2)!}, \quad
    \tilde{B}_k:= \frac{B_k}{e_0}=\frac{2(\varepsilon_2^{k+1}+\varepsilon_1^{k+1})}{(\varepsilon_1+\varepsilon_2)(k+2)!}. 
\end{align*}
Therefore, based on Lemma \ref{lem-2.1}, we obtain
\begin{align*}
    \mathop{\int}_{xx}f(x){\div} \underset{xx}{\int}1 =\sum_{k=0}^{K}\tilde{e}_kf_i^{(k)}+{R},
\end{align*}  
where ${\lvert{R}}\rvert\leq\underset{x\in{[x_L, x_R]}}{\mathrm{max}}\lvert f^{(K+1)}(x)\rvert\tilde{ B}_{K+1}$.

To prepare for the scheme construction, the derivatives \( f^{(k)}(x_i) \) are approximated by linear combinations of function values at nearby grid points:
\begin{align*}
    f_i^{(k)} \approx \sum_{\alpha = i - m}^{i + m} c_\alpha f(x_\alpha).
\end{align*}
This leads to a discrete scheme where all derivatives are eliminated and only function values at grid points \( x_\alpha \) (\( \alpha = 0, 1, \ldots, J \)) appear.
\subsection{IMVL Scheme Construction for $v=w_x$-Type Problems}
To demonstrate the implementation of the IMVL, we begin with a simplified model where \( v = w_x \). This setting serves as a foundational case that illustrates the essential principles and discretization steps involved in the method. The resulting numerical scheme establishes a basis for extending the IMVL framework to more general and complex problems.
\par
With $v = w_x$, the IMVL is directly applied to both sides to obtain the following
\begin{align}
    \mathop{\int}_{xx} v{\div} \underset{xx}{\int}1 =\mathop{\int}_{xx}w_x{\div} \underset{xx}{\int}1. \label{eq-eq}
\end{align}
Our objective is to construct schemes attaining at least fourth-order accuracy, assuming that \( w(x) \) satisfies the required smoothness conditions. By Lemma~\ref{lem-2.1}, we have
\begin{align}
    \underset{xx}{\int}v\div \underset{xx}{\int}1&=\sum_{k=0}^{3}\tilde{e}_kv_i^{(k)}+ {R_1},\label{eq-R1}\\
    \underset{xx}{\int}w_{x}\div \underset{xx}{\int}1&=\sum_{k=0}^{3}\tilde{e}_kw_i^{(k+1)}+ {R_2},\label{eq-R2}
\end{align}
where  $ \left |{R_j}\right |\leq C\tilde{B}_4, j=1, 2$, with \( C \) being a positive constant. 
\par
Differing from previous works where the integral parameters \(\varepsilon_1\) and \(\varepsilon_2\) were fixed, this study sets \(\varepsilon_1 = \varepsilon_2 = m h\) with \( m \in \mathbb{R} \). Under this setting, the coefficients in \eqref{eq-R1} and \eqref{eq-R2} are given by
\begin{align*}
    \tilde{e}_0&=1, \quad \tilde{e}_1=0,\quad 
    \tilde{e}_2=\frac{m^2}{12}h^2, \quad
    \tilde{e}_3=0, \quad
    \tilde{B}_4=\frac{m^4}{360}h^4. 
\end{align*}
Then, the derivatives in \eqref{eq-R1} and \eqref{eq-R2} need to be approximated by linear combinations of function values at discrete points. However,the choice of these points depends on the relative locations of variables \( v \) and \( w \) within the computational grid. 
To effectively capture the structural features of the governing equations, this paper proposes two discretization strategies, tailored to node-centered and staggered grid arrangements, respectively.
\subsubsection{Node-Centered Grid IMVL Scheme} 
We first consider the case where both $w(x)$ and $v(x)$ are defined on node-centered grids. The grid configuration is illustrated in Figure \ref{fig_1}.
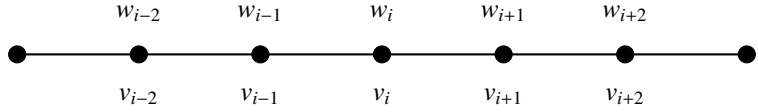
\begin{figure}[ h!]
\centering
\begin{tikzpicture}[scale=1. 6]
    \draw[thick] (0,0) -- (6,0);

    \foreach \x in {0,1,2,3,4,5,6} {
        \filldraw (\x,0) circle (2pt);
    }

    \node[above] at (1,0.2) {$w_{i-2}$};
    \node[above] at (2,0.2) {$w_{i-1}$};
    \node[above] at (3,0.2) {$w_{i}$};
    \node[above] at (4,0.2) {$w_{i+1}$};
    \node[above] at (5,0.2) {$w_{i+2}$};

    \node[below] at (1,-0.2) {$v_{i-2}$};
    \node[below] at (2,-0.2) {$v_{i-1}$};
    \node[below] at (3,-0.2) {$v_{i}$};
    \node[below] at (4,-0.2) {$v_{i+1}$};
    \node[below] at (5,-0.2) {$v_{i+2}$};

\end{tikzpicture}
\caption{Node-centered grid with variables \( w \) and \( v \)}\label{fig_1}
\end{figure}
\par
We consider uniformly spaced nodes \( \{x_i\}_{i\in\mathbb{Z}} \) with spacing \( h = x_{i+1} - x_i \). The functions \( w \) and \( v \) are evaluated at the grid points as \( w(x_i) \) and \( v(x_i) \), respectively. To construct discrete schemes, we define a set of stencil points:
\begin{align}
\{ x_{i + p} \mid p\in \{-2, -1, 0, 1, 2\} \}.
\end{align}
We construct linear combinations of \( v_{i+p} \) and \( w_{i+p} \) at stencil points to approximate the derivative terms in \eqref{eq-R1} and \eqref{eq-R2}:
\begin{equation}\label{eq-new}
\begin{aligned}
    &\sum_{k=0}^{3}\tilde{e}_kv_i^{(k)} = \sum_{ p} a_pv_{i+p} + \mathcal{O}(h^4), \\
    &\sum_{k=0}^{3}\tilde{e}_kw_i^{(k+1)} = \sum_{p} \frac{d_p}{h} w_{i+p} + \mathcal{O}(h^4),
\end{aligned}
\end{equation}
where the coefficients \(a_p\) and \(d_p\) are unknowns to be determined.
\par
To determine the coefficients \(a_p\) and \(d_p\), we separately expand the right-hand sides of the two expressions in equation \eqref{eq-new} in Taylor series about the reference point \(x_i\). The corresponding Taylor expansions are
\begin{align*}
v_{i+p} = \sum_{k=0}^{\infty} \frac{(ph)^k}{k!} v^{(k)}(x_i), \quad
w_{i+p} = \sum_{k=0}^{\infty} \frac{(ph)^k}{k!} w^{(k)}(x_i).
\end{align*}
Then, the right-hand sides of \eqref{eq-new} become linear combinations of derivatives at the point \( x_i \). To ensure that the approximations achieve the accuracy required by the IMVL framework, we equate the coefficients of the corresponding derivative terms \( v^{(k)}(x_i) \) and \( w^{(k)}(x_i) \) with the IMVL coefficients \(\tilde{e}_k\) for \( k = 0,1,2,3 \), thereby forming a system of algebraic equations for the coefficients \( a_p \) and \( d_p \). 
\par
By solving the linear system, the coefficients are given as follows
\begin{gather*}
     a_{-2}=a_{2},\quad  a_{-1}=a_{1}= \frac{1}{12} (-48a_2+m^2), \quad a_{0}=\frac{1}{6} (6+36a_2-m^2),
      \\d_1=-d_{-1}= \frac{8-m^2}{12 },\quad d_0=0,\quad d_{2}=-d_{-2} =\frac{-2+m^2}{24}.
\end{gather*}
These coefficient expressions naturally satisfy $\sum_{i=-2}^{2} a_i = 1$ and $\sum_{i=-2}^{2} d_i = 0$.  Here, \(m\) and \(a_2\) are parameters that determine the coefficients \(a_p\) and \(d_p\). 
\par
For notational simplicity, we denote these parameter-dependent linear combinations, defined by the coefficients, as the operators \(\mathcal{A}(m,a_2)\) and \(\mathcal{H}(m)\):
\begin{align*}
    &\mathcal{A}(m,a_2)v_{i }:= a_{2} v_{i-2 }+ a_{1} v_{i-1}+a_0v_{i}+a_1 v_{i+1}+ a_2v_{i+2}, \quad
    \mathcal{H} (m)w_{i}:=\frac{1}{h}(d_{2}w_{i+2}+d_{1}w_{i+1}-d_{1}w_{i-1}-d_{2}w_{i-2}).
\end{align*}
From \eqref{eq-eq} and Taylor expansion, we obtain:
\begin{align}
  & \mathcal{A}(m,a_2)v_{i} = \mathcal{H} (m )w_{i}+ h^4 w_{i}^{(5)} (\frac{1}{30} + a_2  - \frac{{m }^2}{72}) 
    + h^6 w_{i}^{(7)}\left(\frac{15+630a_2  - 7 {m }^2}{3780}\right) + h^8w_{i}^{(9)}\left(\frac{3024a_2  + {m }^2}{241920}\right) +O(h^9). \label{eq-opp} 
\end{align}
This completes the numerical scheme construction for \( v = w_x \) on node-centered grids. A detailed truncation error analysis for the parameterized scheme introduced in \eqref{eq-opp} is provided in Appendix A.
As seen from the truncation error, choosing \( a_2 = \frac{m^2}{72} - \frac{1}{30} \) eliminates the fourth-order error, yielding a sixth-order accurate one-parameter scheme family.
\subsubsection{Staggered-Grid IMVL Scheme} 
We now consider the case where \( w \) and \( v \) are defined on a staggered grid, as illustrated in Figure  \ref{fig_2}. The role of \( \lambda \) is to represent the shift between variable positions arising from the staggered mesh structure.
\par
\begin{figure}[h!]
\centering
\begin{tikzpicture}[scale=1.6]

    \draw[thick] (0,0) -- (6,0);

    \foreach \x in {0,1,2,3,4,5,6} {
        \filldraw[black] (\x,0) circle (2pt);
    }

    \foreach \x in {0.5,1.5,2.5,3.5,4.5,5.5} {
        \draw[black, fill=white] (\x,0) circle (2pt);
    }

    \node[below] at (1,-0.2) {$v_{i-2+\lambda}$};
    \node[below] at (2,-0.2) {$v_{i-1+\lambda}$};
    \node[below] at (3,-0.2) {$v_{i+\lambda}$};
    \node[below] at (4,-0.2) {$v_{i+1+\lambda}$};
    \node[below] at (5,-0.2) {$v_{i+2+\lambda}$}; 

    \node[above] at (1.5,0.2) {$w_{i-\frac{3}{2}+\lambda}$};
    \node[above] at (2.5,0.2) {$w_{i-\f+\lambda}$};
    \node[above] at (3.5,0.2) {$w_{i+\f+\lambda}$};
    \node[above] at (4.5,0.2) {$w_{i+\frac{3}{2}+\lambda}$};

\end{tikzpicture}
\caption{Staggered grid with variables \( w\) and \( v \), with \( \lambda \in \left\{ 0, \tfrac{1}{2} \right\} \)}
\label{fig_2}
\end{figure}
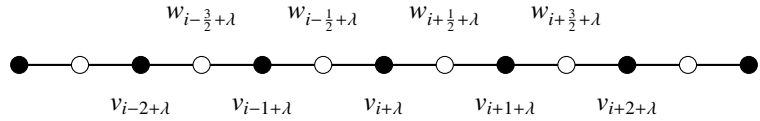
Assume that \( \{x_i\}_{i \in \mathbb{Z}} \) is a uniform grid with constant spacing \( h = x_{i+1} - x_i \), and define the midpoint by  $x_{i+\frac{1}{2}} = (x_i + x_{i+1})/{2}$ .
\par
In the staggered-grid configuration, \(v\) and \(w\) are located at different sets of grid points. Accordingly, two stencil point sets are introduced to support the discretization.
\[
\{ v_{i + p} \mid p\in \{-2, -1, 0, 1, 2\} \}, \quad
\{ w_{i + q} \mid q \in \{-\tfrac{3}{2}, -\tfrac{1}{2}, \tfrac{1}{2}, \tfrac{3}{2}\} \}.
\]
We follow the same construction strategy, with the only difference lying in the stencil point configuration. The approximations take the form:
\begin{equation}\label{eq-old}
\begin{aligned}
    &\sum_{k=0}^{3}\tilde{e}_kv_i^{(k)} = \sum_{p} a_{p} v_{i+p} + \mathcal{O}(h^4), \\
    &\sum_{k=0}^{3}\tilde{e}_kw_i^{(k+1)} = \sum_{q} \frac{b_{q+\f}}{h} w_{i+q} + \mathcal{O}(h^4),
\end{aligned}
\end{equation}
where the unknowns \( a_p \) and \( b_{q+\frac{1}{2}} \) are to be determined.
\par
To determine them, we apply Taylor expansions of \( v_{i+p} \) and \( w_{i+q} \) about the point \( x_i \) and substitute the results into \eqref{eq-old}. This yields linear combinations of derivatives, whose coefficients are then matched to the IMVL coefficients \( \tilde{e}_k \) for \( k = 0, 1, 2, 3 \), resulting in a linear system for \( a_p \) and \( b_{q+\frac{1}{2}} \).
\par
Solving the linear system yields the coefficients below. Since the coefficients \( a_p \) are identical to those in the node-centered formulation, they are omitted here.
\begin{gather*}
    b_1=-b_{0}= \frac{9-2m^2}{8}, \quad b_{2}=-b_{-1} =\frac{-1+2m^2}{24}.
\end{gather*}
The corresponding linear combinations are expressed as the following operators, where \( \lambda \in \left\{ 0, \tfrac{1}{2} \right\} \) indicates the staggered position of the variables.
\begin{align*}
    &\mathcal{A}(m,a_2)v_{i+\lambda }:= a_{2} v_{i-2+\lambda }+ a_{1} v_{i-1+\lambda}+a_0v_{i+\lambda}+a_1 v_{i+1+\lambda}+ a_2v_{i+2+\lambda},\\   
    & \delta(m)w_{i+\lambda}:=\frac{1}{h}(b_2w_{i+\frac{3}{2}+\lambda}+b_1w_{i+\f+\lambda}-b_1w_{i-\f+\lambda}-b_{2}w_{i-\frac{3}{2}+\lambda}).
\end{align*}
\begin{remark}
    The parameter \( \lambda \) reflects the staggered grid configuration of \( v \) and \( w \), acting as an index offset. This has no impact on the computation of \( a_{p} \) and \( b_{q+\f} \), as they are defined independently of \( \lambda \).
\end{remark}
To differentiate these parameters from those in \eqref{eq-opp}, \( m \) and \( a_2 \) are replaced by \( m^* \) and \( a_2^* \), respectively. This change is only for clarity and does not imply a fixed association with the numerical schemes introduced later. Applying equation \eqref{eq-eq} and Taylor expansion, we obtain
\begin{align}
  &\mathcal{A}(m^*,a^*_2)v_{i+\lambda} = \delta(m^*)w_{i+\lambda}+ h^4 w_{i+\lambda}^{(5)}( \frac{3}{640}  - \frac{{m^{*}}^2}{288} + a^*_2) \notag\\
   & + h^6 w_{i+\lambda}^{(7)}\left(\frac{135 + 80640 a^*_2 - 161 {m^{*}}^2 }{483840}\right) + h^8w_{i+\lambda}^{(9)}\left(\frac{3024a^*_2 + {m^{*}}^2}{241920}\right) +O(h^9).\label{eq-op} 
\end{align}
Appendix~A derives the truncation error for the above parameterized scheme. 
The truncation error shows that choosing
$a_2^* = \frac{{m^{*}}^2}{288} - \frac{3}{640}$
eliminates the leading fourth-order error, resulting in a sixth-order accurate scheme and forming a one-parameter family of sixth-order schemes.
\subsubsection{Examples of IMVL Schemes} 
Starting from the parameterized scheme, we select particular parameter values to derive specific numerical schemes, which will be employed in the numerical experiments presented later.
\par
For $ m=m^*=\frac{\sqrt 2}{2}, a_{2}=a_2^*=0$, the operators  $\mathcal{A}(m^*,{a_2}^*)$, $\delta({m}^*)$, $\mathcal{A}(m,a_2)$ and $\mathcal{H}(m) $ can be represented by the following schemes,where $\lambda \in \left\{ 0, \frac{1}{2} \right\}$
\begin{equation*}
 \textbf{HOS1:}\quad
\begin{aligned}
& \frac{1}{24}(v_{i+1+\lambda}+22v_{i+\lambda}+v_{i-1+\lambda}) = \frac{1}{h}(w_{i+\frac{1}{2}+\lambda} - w_{i-\frac{1}{2}+\lambda}) + O(h^4),\\
&\frac{1}{24}(v_{i+1}+22v_{i}+v_{i-1}) = \frac{1}{16h}(w_{i-2} -10 w_{i-1}+10 w_{i+1}-w_{i+2})+ O(h^4).
\end{aligned}
\end{equation*}
This numerical format adopted in \cite{shi2022fourth} was applied to solve nonlinear contaminant transport with adsorption.
\par
When $ m=m^*=\sqrt{2}, a_{2}=a_{2}^*=0$, we obtain new formats as follows:
\begin{equation*}
\textbf{HOS2:}\quad
\begin{aligned}
&\frac{1}{6}(v_{i-1+\lambda}+4v_{+\lambda}+v_{i+1+\lambda})=\frac{1}{8h}(-5w_{i-\frac{1}{2}+\lambda}+5w_{i+\frac{1}{2}+\lambda}+w_{i+\frac{3}{2}+\lambda}-w_{i-\frac{3}{2}+\lambda}) +O(h^4),\\
&\frac{1}{6}(v_{i-1}+4v_i+v_{i+1})=\frac{1}{2h}(w_{i+1}-w_{i-1}) +O(h^4).
\end{aligned}
\end{equation*}
\par
When $ m=m^*=\frac{\sqrt{11}}{2}, a_{2}=a_{2}^*=\frac{7}{1440}$, we obtain new sixth-order compact schemes as follows:
\begin{equation*}
\textbf{HOS3:}\quad
\begin{aligned}
&\frac{7}{1440}v_{i-2+\lambda}+\frac{151}{720}v_{i-1+\lambda}+\frac{137}{240}v_{i+\lambda}+\frac{151}{720}v_{i+1+\lambda}+\frac{7}{1440}v_{i+2+\lambda}\\
=&\frac{1}{16h}(-7w_{i-\frac{1}{2}+\lambda}+7w_{i+\frac{1}{2}+\lambda}+3w_{i+\frac{3}{2}+\lambda}-3w_{i-\frac{3}{2}+\lambda}) +O(h^6),\\
&\frac{7}{1440}v_{i-2}+\frac{151}{720}v_{i-1}+\frac{137}{240}v_i+\frac{151}{720}v_{i+1}+\frac{7}{1440}v_{i+2}
=\frac{1}{32h}(w_{i+2}-14w_{i-1}+14w_{i+1}-w_{i-2}) +O(h^6).
\end{aligned}
\end{equation*}
\par
Substituting the specific parameter values into the operators
$\mathcal{A}\left(9\sqrt{\frac{3}{119}}, \right.$ 
$\left.\frac{183}{76160}\right), \,
\delta\left(9\sqrt{\frac{3}{119}}\right), \,
\mathcal{A}\left(2\sqrt{\frac{6}{7}}, \frac{1}{70}\right)$ and
$\mathcal{H}\left(2\sqrt{\frac{6}{7}}\right)$,
we obtain the following eighth-order compact schemes
\begin{equation*}
\textbf{HOS4:}\quad
\begin{aligned}
&\frac{183}{76160}v_{i-2+\lambda}+\frac{3057}{19040}v_{i-1+\lambda}+\frac{3667}{5440}v_{i+\lambda}+\frac{3057}{19040}v_{i+1+\lambda}+\frac{183}{76160}v_{i+2+\lambda}\\
=&\frac{1}{2856h}(-1755w_{i-\frac{1}{2}+\lambda}+1755w_{i+\frac{1}{2}+\lambda}+367w_{i+\frac{3}{2}+\lambda}-367w_{i-\frac{3}{2}+\lambda}) +O(h^8),\\
&\frac{1}{70}v_{i-2}+\frac{8}{35}v_{i-1}+\frac{18}{35}v_i+\frac{8}{35}v_{i+1}+\frac{1}{70}v_{i+2}
=\frac{1}{84h}(5w_{i+2}+32w_{i+1}-32w_{i-1}-5w_{i-2}) +O(h^8).
\end{aligned}
\end{equation*}
\par
The above represents typical schemes, while various numerical formats can be derived by parameter variations, which will not be elaborated here.
\subsection{ High-Order Schemes for Contaminant Transport with Adsorption}
Building on the simplified setting, we now extend the IMVL framework to more complex models by developing high-order schemes for contaminant transport with adsorption.
\subsubsection{Periodic BC Numerical Schemes}
We first consider the construction of the scheme under periodic boundary conditions
\begin{equation}\label{eq-bc}
\begin{aligned}
& c_t+\phi(c)_t-(Dc_x)_x+(u c)_x =f(x, t), \quad(x, t) \in(0,x_R) \times(0, T], \\
&c(0,t)=c(x_R,t),\quad t\in (0, T],\\
&c(x,0)=c_0(x),\quad x\in (0,x_R).
\end{aligned}
\end{equation}
For the purpose of numerical scheme construction, we introduce the auxiliary variables \( z = -D c_x \), \( q = z_x \), and \( p = (u c)_x \), representing the diffusive flux, the divergence of the flux, and the convection term, respectively.
Accordingly, equation \eqref{eq-bc} can be equivalently written as:
\begin{equation}\label{rewrite}
\begin{cases}
    & c_t+\phi(c)_t+ q+p=f(x, t), \quad(x, t) \in(0,x_R) \times(0, T], \\
    & -\frac{z}{D}=c_x, \quad(x, t) \in(0,x_R) \times(0, T],\\
    & q=z_x,\quad(x, t) \in(0,x_R) \times(0, T],\\
    &p=(uc)_x,\quad(x, t) \in(0,x_R) \times(0, T].
\end{cases}
\end{equation}
\par
We adopt a block-centered grid as illustrated in Figure 3, where fluxes are defined at cell centers and variables at cell boundaries. The mesh is constructed by dividing the interval \( [0, x_R] \) into points \( \Pi:x_0 = 0 < x_1 < \cdots < x_{J-1} < x_J = x_R \).
\begin{figure}[h!] \label{fig333}
\centering
\begin{tikzpicture}[scale=1.3]
    \draw[thick] (0,0) -- (8,0);
    
    \draw (0,0) circle (3pt);
    \draw (2,0) circle (3pt);
    \draw (4,0) circle (3pt);
    \draw (6,0) circle (3pt);
    \draw (8,0) circle (3pt);

    \foreach \x in {1,3,5,7} {
        \filldraw (\x,0) circle (3pt);
    }
    
    \node[above] at (0,0.2) {$c_0$};
    \node[above] at (1,0.2) {$z_\frac{1}{2}$};
    \node[above] at (7,0.2) {$z_{J-\f}$};
    \node[above] at (8,0.2) {$c_{J}$};
    
    \node[below] at (0,-0.2) {$x_{0}$};
    \node[below] at (1,-0.2) {$x_\frac{1}{2}$};
    \node[below] at (2,-0.2) {$x_{1}$};
    \node[below] at (3,-0.2) {$x_{1+\f}$};
    \node[below] at (4,-0.2) {$\cdots$};
    \node[below] at (5,-0.2) {$\cdots$};
    \node[below] at (6,-0.2) {$x_{J-1}$};
    \node[below] at (7,-0.2) {$x_{J-\frac{1}{2}}$};
    \node[below] at (8,-0.2) {$x_{J}$};
    
    \node at (7,0) {$\cdots$};
\end{tikzpicture}
\caption{One-dimensional block-centered grids ( ◦-$c$, •-$z$)}
\end{figure}
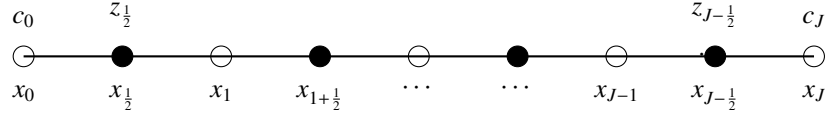
Let \( J \in \mathbb{N}_+ \) be a positive integer, and define the uniform grid size as \( h = x_R/J \). The grid nodes are given by \( x_i = i h \) for \( i = 0, \dots, J-1 \), and the intervals \( I_i = (x_i, x_{i+1}) \) form the primary mesh. The block-centered dual partition is defined as \( \Pi^* = \{ x_{i+\frac{1}{2}} \} \), where \( x_{i+\frac{1}{2}} = (x_i + x_{i+1}) / 2 \).

To prepare for spatial discretization, we analyze the structure of system~\eqref{rewrite}. The first equation is an ordinary differential equation involving a time derivative. The remaining three equations do not contain spatial derivatives on the left-hand side, while their right-hand sides are first-order derivatives with respect to \( x \). These are handled uniformly by considering them as \( v = w_x \)-type problems.
\par
Therefore, the spatial discretization employs IMVL schemes introduced in the previous section, while the temporal discretization adopts the following Euler scheme.
Let $N$ be a positive integer with $ \Delta t={T}  / {N}, t_n=n\Delta t$ and $c^n_i=c(x_i, t_n)$. The time difference operator is then defined as follows:
\begin{align*}
\partial_t c_i^{n+1}=\frac{c_i^{n+1}-c_i^{n}}{\Delta t}. 
\end{align*}
\par
Let $C_{i}$, $Z_{i+\f}$, $P_{i}$, $Q_{i}$ and $\Phi_i$ represent the approximations of $ c(x_i)$, $z(x_{i+\f})$, $p(x_i)$, $q(x_i)$ and $\phi(x_i)$ respectively. The full discretization scheme is expressed as
\begin{align}
& \partial_{t} C_{i}^{n+1}+ \partial_{t}\Phi_i^{n+1}+Q_{i}^{n+1} + P_i^{n} =  f_{i}^{n+1},\quad i= 1,2, \cdots, J, \\
&\A(m,a_2)\left(\frac{Z}{D}\right)_{i+1 / 2}^{n+1}  = -\delta(m) C_{i+1 / 2}^{n+1}, \quad i= 0,1, \cdots, J-1,\\
&\A(m,a_2) Q_i^{n+1}=\delta(m)  Z_i^{n+1},\quad i= 1,2, \cdots, J,\\
&\A(m^*,a_{2}^*) P_i^{n}=\mathcal{H} (m^*) (uC)_i^{n},\quad i= 1,2, \cdots, J.
\end{align}
along with the boundary and initial conditions
\begin{align}
&C_{-1}^{n} = C_{J-1}^{n}, \quad C_{0}^{n}  = C_{J}^{n}, \quad C_{1}^{n} = C_{J+1}^{n}, \quad C_{2}^{n} = C_{J+2}^{n}\label{boundary-1} \\
&P_{-1}^{n} = P_{J-1}^{n}, \quad P_{0}^{n}  = P_{J}^{n}, \quad Q_{-1}^{n} = Q_{J-1}^{n}, \quad Q_{0}^{n}  = Q_{J}^{n} \\
&Z_{1 / 2}^{n}  = Z_{J+1 / 2}^{n}, \quad Z_{-1 / 2}^{n}  = Z_{J-1 / 2}^{n} \label{boundary-2}\\
&C_{i}^{0}  = c_{0}\left(x_{i}\right)\label{eq-int}
\end{align}
Owing to the invertibility of the operator under periodic boundary conditions, the discrete scheme reduces to the following form
\begin{align}
&\A(m,a_2) \partial_{t} C_{i}^{n+1}+\A(m,a_2) \partial_{t} \Phi_{i}^{n+1}+\delta(m) Z_{i}^{n+1}  \notag \\
&= -\A(m,a_2)\A(m^*,a_2^*)^{-1}\mathcal{H} (m^*)(u C)_{i}^{n}+\A(m,a_2) f_{i}^{n+1},\quad i= 1,2, \cdots, J, \label{eq-dis1} \\
&\delta(m) C_{i+1 / 2}^{n+1}+\A(m,a_2)\left(\frac{Z}{D}\right)_{i+1 / 2}^{n+1}  = 0,  \quad i= 0,1, \cdots, J-1.\label{eq-dis2}
\end{align}
The fully discrete scheme involves four tunable parameters: \(m\), \(a_2\), \(m^*\), and \(a_2^*\) that determine the accuracy of the solution and flux approximations. By adjusting these parameters, fourth-, sixth-, and eighth-order schemes are obtained within a unified framework. 
The previously introduced HOS1–HOS4 schemes arise as special cases of this parameterized family.
\subsubsection{Dirichlet BC Numerical Schemes}
After periodic boundary conditions, we now focus on contaminant transport with adsorption under Dirichlet boundary conditions, governed by:
\begin{equation}
\begin{aligned}
& c_t+\phi(c)_t + z_x+(u c)_x =f(x, t), \quad(x, t) \in(x_L, x_R) \times(0, T], \\
&c_x=\frac{z}{D}, \quad(x, t) \in(x_L, x_R) \times(0, T], \\
&c(x_L,t)=g_L(t),\quad c(x_R,t)=g_R(t),\quad t\in (0, T],\\
&c(x,0)=c_0(x),\quad x\in (x_L,x_R).
\end{aligned}
\end{equation}
where \( g_L(t) \) and \( g_R(t) \) are prescribed boundary concentration functions at the left and right boundaries, respectively.
\par
We construct boundary-matching schemes HOS1-D and HOS2-D corresponding to the classical compact difference schemes HOS1 and HOS2. To maintain structural consistency, we retain the formulation \(v = w_x\) as the foundation of the scheme, modifying only the stencil near boundaries to ensure compatibility with Dirichlet conditions. Derivations follow below.\\
\textbf{(a):HOS1-D}
\par
Symmetric stencils become invalid near boundaries due to the unavailability of required neighboring points. For instance, \({\A} v_{1/2}\) cannot be approximated using its left neighbor. Although \(w_0\) is accessible in the evaluation of \({\mathcal{H}} w_1\), stencil adaptation remains necessary to satisfy Dirichlet conditions.

To this end, we introduce asymmetric stencils. Let $ \left\{ x_{\Lambda_*} \mid  \Lambda_* = \left\{ \frac{1}{2}, \frac{3}{2}, \frac{5}{2}, \frac{7}{2} \right\} \right\}$
 and $ \left\{ x_{\Lambda} \mid  \Lambda = \left\{0,1,2,3,4\right\} \right\}$
 denote the sets of stencil points for \(v\) and \(w\), respectively. Derivatives are approximated by linear combinations of function values over these stencils, given by
\begin{align*}
    \sum_{k=0}^{3}\tilde{e}_kv_i^{(k)} =\sum_{r\in \Lambda_* }l_{r+\f} v_{{r}} + O(h^4),\quad
    \sum_{k=0}^{3}\tilde{e}_kw_i^{(k+1)} = \sum_{r\in\Lambda }\frac{k_{r} }{h}w_{r}+ O(h^4).
\end{align*}
where the coefficients \(l_{r+\frac{1}{2}}\) and \(k_r\) are unknowns to be determined.
\par
Specifically, we perform Taylor expansions of \( v_{3/2}, v_{5/2}, v_{7/2} \) around \( v_{1/2} \), and of \( w_0, w_2, w_3, w_4 \) around \( w_1 \).These expansions convert these into linear combinations of derivatives, which are then matched to the coefficients \(\tilde{e}_k\) for \(k=0,1,2,3\). This process leads to a system of algebraic equations for the coefficients \( l_{r+1/2} \) and \( k_r \).
\par
Solving the system of linear equations, we have
\begin{align*}
    &l_{1}=\frac{(6+m^2)}{6},l_2=-\frac{5m^2}{12},\quad l_3=\frac{ m^2}{3},\quad l_4=-\frac{m^2}{12},\\
     k_{0}=\frac{-2 - m^2}{8},&\quad k_{1}=\frac{5 (-2 + m^2)}{12},\quad k_{2}=\frac{3 - m^2}{2},\quad  k_{3}=\frac{-2 + m^2}{4},\quad k_{4}=\frac{2 - m^2}{24}.
\end{align*}
Varying the parameter $m$ yields a family of numerical schemes for the left boundary.  The right boundary is treated analogously, but only the left boundary treatment is detailed here for conciseness.
\par
In HOS1-D, the parameter \( m = \frac{\sqrt{2}}{2} \) remains consistent with HOS1. The operators \(\hat{\A}\) and \(\hat{\mathcal{H}}\), incorporating boundary point information while preserving HOS1's core structure, are defined as follows, with \(\lambda \in \{0, \frac{1}{2}\}\).
\begin{align*}
    &\mathcal{A}v_{i+\f}=\frac{1}{24}(v_{i+\frac{3}{2} }+22v_{i+\frac{1}{2} }+v_{i-\frac{1}{2}}), \quad
    \delta w_{i+\lambda}=\frac{1}{h}(w_{i+\frac{1}{2}+\lambda} - w_{i-\frac{1}{2}+\lambda}),\\
    &\mathcal{H} w_{i}=\frac{1}{16h}(w_{i-2 }-10w_{i-1 }+10w_{i+1}-w_{i+2}),
\end{align*}
\begin{equation} 
\begin{aligned}
\hat{\A}v_{i+\f}=&\begin{cases}
\mathcal{A}v_{i+\f},\quad  \quad i = 1,2,\dots ,J-2,\\ 
 \frac{1}{24} (26 v_\frac{1}{2}-5 v_\frac{3}{2}+4 v_\frac{5}{2}-v_\frac{7}{2}),\quad i = 0,\\ 
\frac{1}{24} (26 v_{J-\frac{1}{2}}-5 v_{J-\frac{3}{2}}+4 v_{J-\frac{5}{2}}-v_{J-\frac{7}{2}}),\quad i = J-1,
\end{cases}\\
\hat{\mathcal{H} }w_{i}=&\begin{cases}
 \mathcal{H} w_{i},\quad  \quad i = 2,3,\dots ,J-2,\\ 
 \frac{1}{16h} (-5 w_0-10 w_1+20 w_2-6w_3+w_4), \quad i = 1,\\ 
\frac{1}{16h} (5 w_{J}+10 w_{J-1}-20 w_{J-2}+6w_{J-3}-w_{J-4}),\quad i = J-1.\notag
\end{cases}
\end{aligned}    
\end{equation}
This numerical format takes the following form
\begin{align*}
&\A \partial_t C_i^{n+1}+\A \partial_t \Phi_i^{n+1}+\delta Z_i^{n+1}=-\hat{\mathcal{H} }\left(u C\right)_i^n+\A f_i^{n+1}, i=1,2, \cdots, J-1,\\
& \delta C_{i+1 / 2}^{n+1}+\hat{\A}\left(\frac{Z}{D}\right)_{i+1 / 2}^{n+1}=0, \quad i=0,1, \cdots, J-1, \\
& C_0^{n+1}=c\left(x_L, t^{n+1}\right),\quad C_J^{n+1}=c\left(x_R, t^{n+1}\right).
\end{align*}
\textbf{(b):HOS2-D}
\par
The boundary treatment of HOS2-D builds on the HOS1-D approach. Specifically, \(\hat{\A}v_{1/2}\) uses the same asymmetric stencil as HOS1-D, while \(\hat{\delta}w_{1/2}\) is constructed from a five-point stencil
 $ \left\{ x_{\mathcal{J}} \mid  \mathcal{J} = \left\{0,1,2,3,4\right\} \right\}$, satisfying
\begin{align*}
     \underset{xx}{\int}w_x\div \underset{xx}{\int}1= \sum_{r\in\mathcal{J} }\frac{g_{r} }{h}w_{r}+ O(h^4).
\end{align*}
The coefficients \(g_r\) are obtained by Taylor expanding around \(w_{1/2}\) on \(\{ w_j \}_{j=0}^4\) and matching derivative coefficients with \(\tilde{e}_k\) (\(k=0,1,2,3\)). Solving the system of linear equations, we have
\begin{gather*}
  g_0=\frac{-11 - 2 m^2}{12},\quad g_1=\frac{17 + 14 m^2}{24},\\
  g_2=-\frac{3 (-1 + 2 m^2)}{8},\quad g_3=\frac{5 (-1 + 2 m^2)}{24},\quad g_4=\frac{1 - 2 m^2}{24}.
\end{gather*}
\par
In HOS2-D, the parameter \( m = \sqrt{2} \) remains consistent with HOS2. The operators \(\hat{\mathcal{A}}\) and \(\hat{\delta}\) incorporate boundary information while preserving the core structure of HOS2. The operators are defined as follows, with \(\lambda \in \{0, \frac{1}{2}\}\).
\begin{align*}
    &\mathcal{A}v_{i+\f+\lambda }=\frac{1}{6}(v_{i+\frac{3}{2}+\lambda}+4v_{i+\f+\lambda}+v_{i-\f+\lambda}), \quad
    \mathcal{H} w_{i}=\frac{1}{2h}(w_{i+1}-w_{i-1}),\\
    &\delta w_{i+\lambda}=\frac{1}{8h}(-5w_{i-\f+\lambda}+5w_{i+\f+\lambda}+w_{i+\frac{3}{2}+\lambda}-w_{i-\frac{3}{2}+\lambda}),
\end{align*}
\begin{equation}
\begin{aligned}
\hat{\A}v_{i+\f}&=\begin{cases}
\mathcal{A}v_{i+\f },\quad  \quad i = 1,2,\dots J-2,\\ 
 \frac{1}{6} (8v_{\f }-5 v_{\frac{3}{2}}+4 v_{\frac{5}{2}}-v_{\frac{7}{2}}),\quad i = 0,\\ 
\frac{1}{6} (8v_{J-\f}-5 v_{J-\frac{3}{2}}+4 v_{J-\frac{5}{2}}-v_{J-\frac{7}{2}}),\quad i = J-1,
\end{cases}\\
\hat{\delta}w_{i+\lambda}&=\begin{cases}
\delta w_{i+\lambda},\quad  \quad i = 1,2,\dots ,J-2,\\ 
 \frac{1}{8h} (-10 w_{\f-\lambda}+15 w_{\frac{3}{2}-\lambda}-9 w_{\frac{5}{2}-\lambda}+5w_{\frac{7}{2}-\lambda}-w_{\frac{9}{2}-\lambda}), \quad i = 0,\\ 
\frac{1}{8h} (10 w_{J-\f+\lambda}-15 w_{J-\frac{3}{2}+\lambda}+9 w_{J-\frac{5}{2}+\lambda}-5w_{J-\frac{7}{2}+\lambda}+w_{J-\frac{9}{2}+\lambda}),\quad i = J-1.\notag
\end{cases}
\end{aligned}
\end{equation}
This numerical format takes the following form
\begin{align}
&\A \partial_t C_i^{n+1}+\A \partial_t \Phi_i^{n+1}+\hat{\delta} Z_i^{n+1}=-\mathcal{H} \left(u C_i^n\right)+\A f_i^{n+1}, i=1,2, \cdots, J-1,\notag\\
& \hat{\delta }C_{i+1 / 2}^{n+1}+\hat{\A}\left(\frac{Z}{D}\right)_{i+1 / 2}^{n+1}=0, \quad i=0,1, \cdots, J-1, \notag\\
& C_0^{n+1}=c\left(x_L, t^{n+1}\right),\quad C_J^{n+1}=c\left(x_R, t^{n+1}\right).\notag
\end{align}
The construction of the numerical schemes is now complete.  
We next introduce the discrete inner products and norms essential for error and stability analysis.

\par
We define the discrete inner products and norms associated with the numerical scheme. Let 
$S_c = \{ C \mid \{C_i\}, \, x_i \in \Pi \}$ and $S_z = \{ Z \mid \{Z_{i+\frac{1}{2}}\}, \, x_{i+\frac{1}{2}} \in \Pi^* \}$ denote the spaces of periodic grid functions. For any \( P, Q \in S_c \) and \( U, V \in S_z \), the discrete inner products and corresponding norms are defined as:
\begin{align*}
 & \inner{P,Q}=\sum_{i=1}^{J} hP_iQ_i, \quad \n{P}=\inner{P,P}^{\f},\quad \n{P}_C=\underset{1\leq i\leq J}{\mathrm{max}}|P_{i}|, \\
 & (U,V)=\sum_{i=0}^{J-1} hU_{i+\f}V_{i+\f}, \quad \3{U}=(U,U)^{\f}, \quad
\3{U}_C=\underset{0\leq i\leq J-1}{\mathrm{max}}|U_{i+\f}|.
\end{align*}
\section{Mass Conservation, Stability and Convergence Analysis}
For the remainder of this paper, we restrict our analysis to numerical schemes satisfying the stability criterion \(a_0 - 2|a_1| - 2|a_2| > 0\) and consider those with periodic boundary conditions. The notations \(\mathcal{A}(m,a) \equiv \mathcal{A}\), \(\delta(m) \equiv \delta\), \(\mathcal{A}(m^*,a^*) \equiv \mathcal{A}^*\), and \(\mathcal{H}(m^*) \equiv \mathcal{H}\) are used throughout.
\subsection{Mass Conservation Analysis}
\begin{theorem}
    The proposed scheme \eqref{eq-dis1}-\eqref{eq-dis2} ensures mass conservation, that is
\begin{align}\label{eq-mass}
\sum_{i=1}^{J}C_i^{n+1}h+\sum_{i=1}^{J}\Phi_i^{n+1}h =\sum_{i=1}^{J}C_i^{0}h+\sum_{i=1}^{J}\Phi_i^{0}h+\sum_{k=0}^{n}\sum_{i=1}^{J}f_i^{k+1}h\Delta t. 
\end{align}  
\end{theorem}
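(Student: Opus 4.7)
The plan is to work from the \emph{undecoupled} discrete system (before applying $\mathcal{A}^{-1}$ to form \eqref{eq-dis1}--\eqref{eq-dis2}), because in that form the discrete mass balance is essentially visible: the equation $\partial_t C_i^{n+1} + \partial_t \Phi_i^{n+1} + Q_i^{n+1} + P_i^{n} = f_i^{n+1}$ already has the structure of a conservation law, with $Q$ and $P$ playing the role of divergences of fluxes. So I would sum this equation over $i=1,\dots,J$, multiply by $h$, and then show that the two ``divergence'' contributions $h\sum_i Q_i^{n+1}$ and $h\sum_i P_i^{n}$ vanish thanks to periodicity, leaving only the time-derivative terms and the source, which telescope cleanly in $n$.

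The key step is two summation lemmas on the periodic grid. First, using that the coefficients of $\mathcal{A}$ satisfy $a_0 + 2a_1 + 2a_2 = 1$ (and similarly for $\mathcal{A}^*$), periodic index shifts give
\begin{equation*}
h\sum_{i=1}^{J} \mathcal{A} v_i \;=\; h(a_0 + 2a_1 + 2a_2)\sum_{i=1}^{J} v_i \;=\; h\sum_{i=1}^{J} v_i,
\end{equation*}
and analogously for $\mathcal{A}^*$. Second, because $\delta$ and $\mathcal{H}$ are antisymmetric difference operators (coefficients of the form $b_2, b_1, -b_1, -b_2$ and $d_2, d_1, -d_1, -d_2$), periodic rearrangement yields
\begin{equation*}
h\sum_{i=1}^{J} \delta Z_i^{n+1} = 0, \qquad h\sum_{i=1}^{J} \mathcal{H}(uC)_i^{n} = 0.
\end{equation*}
Applying these to the auxiliary relations $\mathcal{A} Q_i^{n+1} = \delta Z_i^{n+1}$ and $\mathcal{A}^* P_i^{n} = \mathcal{H}(uC)_i^{n}$ gives $h\sum_i Q_i^{n+1} = 0$ and $h\sum_i P_i^{n} = 0$, which is what drops out of the summed equation.

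With these in hand, summing the discrete evolution equation over space and using the two cancellations yields
\begin{equation*}
h\sum_{i=1}^{J} \partial_t C_i^{n+1} + h\sum_{i=1}^{J} \partial_t \Phi_i^{n+1} = h\sum_{i=1}^{J} f_i^{n+1}.
\end{equation*}
Multiplying by $\Delta t$, using $\Delta t\,\partial_t C_i^{n+1} = C_i^{n+1} - C_i^{n}$ (and similarly for $\Phi$), and telescoping the sum from $n=0$ to $n=N$ reproduces \eqref{eq-mass}.

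The main conceptual obstacle is the presence of the composite operator $\mathcal{A}\mathcal{A}^{*-1}\mathcal{H}$ in the reduced scheme \eqref{eq-dis1}: summing it directly is awkward because $\mathcal{A}^{*-1}$ is nonlocal. The trick is to avoid that reduction entirely and start from the pre-inversion system so that every operator to be summed is either $\mathcal{A}$ (summing to identity) or a centered antisymmetric difference (summing to zero). A minor check is verifying $a_0 + 2a_1 + 2a_2 = 1$ for the general parameterization; this was noted right after the coefficient formulas in Section 3.2.1 and also holds for the staggered coefficients, so the argument covers the whole parameter family rather than any specific HOS1--HOS4 instance.
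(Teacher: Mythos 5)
Your proof is correct, and it rests on exactly the same two ingredients as the paper's: the antisymmetric difference operators $\delta$ and $\mathcal{H}$ telescope to zero under periodicity, and the averaging operators $\A$, $\A^*$ preserve grid sums because their coefficients sum to one. The only real difference is which form of the scheme you sum. The paper sums the reduced system \eqref{eq-dis1}--\eqref{eq-dis2} and must therefore dispose of the composite term $\sum_i \A{\A^*}^{-1}\mathcal{H}(uC)_i^n$, which it does by a linearity argument (equation \eqref{eq-mass_222}) whose notation is a little loose --- the honest justification is that any periodic convolution operator whose coefficients sum to one preserves the total, and ${\A^*}^{-1}$ inherits this from $\A^*$. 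You instead sum the pre-inversion four-equation system, deduce $\sum_i Q_i^{n+1}=0$ and $\sum_i P_i^n=0$ directly from $\A Q=\delta Z$, $\A^* P=\mathcal{H}(uC)$ together with the two summation lemmas, and never touch ${\A^*}^{-1}$. That buys a slightly cleaner and more elementary handling of the convection term at the cost of one extra (trivial) inference; both routes are valid and prove the identical statement.
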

\begin{proof} 
By multiplying \eqref{eq-dis1} by $ h$ and summing over $i=1,\dots ,J$, we obtain 
\begin{align}\label{eq-mass1}
   \sum_{i=1}^{J}\mathcal{A}\frac{( C^{n+1}_{i} - C^{n}_{i})h}{\Delta t} + \sum_{i=1}^{J}\mathcal{A}\frac{( \Phi^{n+1}_{i} -  \Phi^{n}_{i})h}{\Delta t}
    + \sum_{i=1}^{J}\delta Z_i^{n+1}h
    = -\sum_{i=1}^{J}-\A {\A^*}^{-1}\mathcal{H} (uC)_i^{n}h+\sum_{i=1}^{J}\mathcal{A} f_{i}^{n+1}h. 
\end{align}
Under periodic boundary conditions, we have
\begin{align}\label{eq-mass_2}
   &\sum_{i=1}^{J}\delta Z_i^{n+1}=\sum_{i=1}^{J}\frac{1}{h}(
    b_2Z^{n+1}_{i+\frac{3}{2}}+b_1Z^{n+1}_{i+\frac{1}{2}}-b_1Z^{n+1}_{i-\frac{1}{2}}-b_2Z^{n+1}_{i-\frac{3}{2}})\notag\\
   & =\frac{1}{h}\left(-(b_1+b_2)Z^{n+1}_{\frac{1}{2}}-b_2Z^{n+1}_{-\frac{1}{2}}-b_2Z^{n+1}_{\frac{3}{2}}+b_2Z^{n+1}_{J-\frac{1}{2}}+(b_1+b_2)Z^{n+1}_{J+\frac{1}{2}}+b_2Z^{n+1}_{J+\frac{3}{2}} \right)=0. 
\end{align}
and
\begin{align}\label{eq-mass_22}
    \sum_{i=1}^{J}\mathcal{H}  (uC)_i^{n}&=\sum_{i=1}^{J}\frac{1}{h}(
    d_2(uC)^{n}_{i+2}+d_1(uC)^{n}_{i+1}-d_1(uC)^{n}_{i-1}-d_2(uC)^{n}_{i-2})\notag\\
    &=\frac{1}{h}\left(-(d_1+d_2)(uC)^{n}_{0}-(d_1+d_2)(uC)^{n}_{1}-
    d_2(uC)^{n}_{2}-d_2(uC)^{n}_{-1}\right)\notag\\
   &\quad +\frac{1}{h}\left((d_1+d_2)(uC)^{n}_{J}+(d_1+d_2)(uC)^{n}_{J+1}+d_2(uC)^{n}_{J+2}+d_2(uC)^{n}_{J-1}\right)\notag\\
   &=0.
\end{align}
Since \(\mathcal{A}\) and \({\mathcal{A}^*}^{-1}\) are linear, their product \(\mathcal{A}{\mathcal{A}^*}^{-1}\) is also linear, and by \eqref{eq-mass_22} we obtain
\begin{align}\label{eq-mass_222}
\sum_{i=1}^J \A \A^{*-1} \mathcal{H}  (uC)_i^n = \A \A^{*-1} \left( \sum_{i=1}^J \mathcal{H}  (uC)_i^n \right) = 0.
\end{align}
For $ g\in {S_c}$, we have
\begin{align}\label{eq-mass_3}
    \sum_{i=1}^{J}\mathcal{A}g_i h&=\sum_{i=1}^{J}
    (a_2 g_{i-2}+ a_1 g_{i-1}+a_0 g_i+a_1 g_{i+1}+ a_2g_{i+2})h\notag\\
    &=(a_0+2a_1+2a_2)\sum_{i=1}^{J}g_{i}h =\sum_{i=1}^{J}g_{i}h.
\end{align}
By inserting \eqref{eq-mass_2} and \eqref{eq-mass_222} into \eqref{eq-mass1}, and using \eqref{eq-mass_3}, we obtain
\begin{align}\label{eq-mass_4}
   \sum_{i=1}^{J}\frac{C_i^{n+1}-C_i^{n}}{\Delta t}h +\sum_{i=1}^{J}\frac{\Phi_i^{n+1}-\Phi_i^{n}}{\Delta t}h =\sum_{i=1}^{J}f_i^{n+1}h. 
\end{align}
Summing \eqref{eq-mass_4} from  0 to $n$ and multiplying by $ \Delta t$ yields \eqref{eq-mass}, thus proving the result.
\end{proof}
\subsection{Stability Analysis}
To establish the stability of the IMVL scheme, we first introduce several lemmas that form the basis of the proof.
\begin{lemma}\label{lem-3.1}
    Let $ C\in S_c, Z\in S_z$, $ R_a \coloneqq a_0-2a_1-2a_2 $, $ R_b \coloneqq  |a_0|+2|a_1|+ 2|a_2| $, we have 
\begin{align*}
   & R_a\n{C}^2\leq \inner{\A{C},C}\leq R_b\n{C}^2,\quad  R_a^2\n{C}^2\leq \inner{\A{C},\A{C}}\leq R_b^2\n{C}^2, \\
   & R_a\3{Z}^2\leq (\A{Z}, Z)\leq  R_b\3{Z}^2 ,\quad 
     R_a^2\3{Z}^2\leq (\A{Z},\A{Z})\leq  R_b^2\3{Z}^2.
\end{align*}
\end{lemma}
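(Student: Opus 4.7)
The plan is to exploit the fact that, under periodic boundary conditions, the operator $\mathcal{A}$ is a symmetric circulant operator with a fixed five–point stencil, and that the discrete inner products $\langle\cdot,\cdot\rangle$ and $(\cdot,\cdot)$ are translation invariant on $S_c$ and $S_z$ respectively. All four estimates will then reduce to one bilinear-form computation combined with the Cauchy--Schwarz inequality.

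First I would expand $\langle\mathcal{A}C,C\rangle$ using the definition of $\mathcal{A}$ and re-index each cross term via periodicity to collect equal shifts. This gives
\begin{equation*}
\langle\mathcal{A}C,C\rangle = h\sum_{i=1}^{J}\bigl(a_0 C_i^2 + 2a_1 C_i C_{i+1} + 2a_2 C_i C_{i+2}\bigr).
\end{equation*}
Applying the elementary bound $2|C_i C_{i+k}|\le C_i^2+C_{i+k}^2$ and then using $\sum_i C_{i+k}^2 = \sum_i C_i^2$ (periodicity of the shift) controls each cross term by $2\|C\|^2$. This yields
\begin{equation*}
\bigl(a_0-2|a_1|-2|a_2|\bigr)\|C\|^2 \;\le\; \langle\mathcal{A}C,C\rangle \;\le\; \bigl(|a_0|+2|a_1|+2|a_2|\bigr)\|C\|^2,
\end{equation*}
which, interpreting $R_a$ consistently with the standing assumption $a_0-2|a_1|-2|a_2|>0$, is exactly the first pair of inequalities.

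For the second pair I would argue as follows. The upper bound comes from writing $\mathcal{A}C = a_2 S^{-2}C + a_1 S^{-1}C + a_0 C + a_1 SC + a_2 S^{2}C$, where $S$ denotes the periodic shift (an isometry of $S_c$ with the discrete norm). The triangle inequality then gives $\|\mathcal{A}C\|\le R_b\|C\|$, i.e. $\langle\mathcal{A}C,\mathcal{A}C\rangle\le R_b^2\|C\|^2$. For the lower bound I would chain Cauchy--Schwarz with the first part: $R_a\|C\|^2 \le \langle\mathcal{A}C,C\rangle \le \|\mathcal{A}C\|\,\|C\|$, so $\|\mathcal{A}C\|\ge R_a\|C\|$, and squaring yields $\langle\mathcal{A}C,\mathcal{A}C\rangle\ge R_a^2\|C\|^2$.

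The proof for $Z\in S_z$ with the staggered product $(\cdot,\cdot)$ and norm $\interleave\cdot\interleave$ is word-for-word the same: only the index set changes, while the stencil coefficients of $\mathcal{A}$, its symmetry, and the translation invariance are unchanged. I do not expect a genuine obstacle here; the only subtlety is the apparent mismatch between the stability criterion (which carries absolute values on $a_1,a_2$) and the definition of $R_a$ as written (which does not). I would treat this as a typographical slip and carry absolute values throughout, which is what the Young-type estimate on the cross terms forces if one wants a sign-independent bound.
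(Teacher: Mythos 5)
Your proposal is correct, and for the first pair of inequalities it matches the paper: expand the stencil, apply the Young-type bound $2|C_iC_{i+k}|\le C_i^2+C_{i+k}^2$ together with periodicity for the lower bound, and Cauchy--Schwarz for the upper bound. For the second pair you take a genuinely different and more elementary route. The paper invokes the existence of a positive-definite self-adjoint square root $\mathcal{A}=\mathcal{B}_x\mathcal{B}_x$ and sandwiches $\langle\mathcal{A}C,\mathcal{A}C\rangle$ between $R_a\langle\mathcal{B}_xC,\mathcal{B}_xC\rangle$ and $R_b\langle\mathcal{B}_xC,\mathcal{B}_xC\rangle$, then applies the first pair once more. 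You instead get the upper bound from the triangle inequality over the shift isometries ($\|\mathcal{A}C\|\le R_b\|C\|$) and the lower bound by chaining $R_a\|C\|^2\le\langle\mathcal{A}C,C\rangle\le\|\mathcal{A}C\|\,\|C\|$; this avoids any appeal to spectral theory while using exactly the same standing hypothesis $R_a>0$ that the paper's square-root argument also needs for positive definiteness. Your remark about the mismatch between $R_a=a_0-2a_1-2a_2$ and the stability criterion $a_0-2|a_1|-2|a_2|>0$ is well taken: the paper's own estimate $a_2C_{i-2}C_i\ge -a_2\bigl(\tfrac{C_{i-2}^2}{2}+\tfrac{C_i^2}{2}\bigr)$ is only valid for $a_2\ge 0$, so carrying absolute values as you do is the correct sign-independent reading (and is harmless for the concrete schemes HOS1--HOS4, whose coefficients are all nonnegative).
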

\begin{proof}
The definition of  $ \A$ ( or $ \A^*$) and \eqref{boundary-1} imply that
\begin{align*}
     \inner{\A{C},C}&=\sum_{i=1}^{J}(a_2 C_{i-2}+ a_1 C_{i-1}+a_0 C_i+a_1 C_{i+1}+ a_2C_{i+2})C_{i}h \\
     &\geq h\sum_{i=1}^{J}\Bigl(a_0C_{i}^2-a_2(\frac{C_{i-2}^2}{2}+\frac{C_{i}^2}{2})-a_1(\frac{C_{i-1}^2}{2}+\frac{C_{i}^2}{2})
      -a_1(\frac{C_{i+1}^2}{2}+\frac{C_{i}^2}{2})-a_2(\frac{C_{i+2}^2}{2}+\frac{C_{i}^2}{2})\Bigr) \\
     &\geq h\sum_{i=1}^{J}(a_0-2a_1-2a_2)C_{i}^2= R_a\n{C}^2. 
\end{align*}
Applying the Cauchy-Schwarz inequality yields
\begin{align*}
   \inner{\A{C},C}
   &\leq h\Biggl(\left |a_0\sum_{i=1}^{J}C_{i}^2  \right |+\left |a_2\sum_{i=1}^{J}C_{i+2}C_{i}\right |+\left |a_1\sum_{i=1}^{J}C_{i+1}C_{i}\right |+\left |a_2\sum_{i=1}^{J}C_{i-2}C_{i}  \right |+\left |a_1\sum_{i=1}^{J}C_{i-1}C_{i}  \right |\Biggr) \\
    &\leq(|a_0|+2|a_1|+ 2|a_2|)\n{C}^2=R_b\n{C}^2. 
\end{align*}
The positive definite and self-adjoint operator $\A$ 
admits a square root  ($\A=\mathcal{B}_x\mathcal{B}_x$) with the same properties, yielding
\begin{align*}
  R_a^2\n{C}^2 \leq R_a \inner{ \mathcal{B}_x C,\mathcal{B}_x C} \leq \inner{\A C,\A C} \leq R_b \inner{\mathcal{B}_x C,\mathcal{B}_x C}\leq R_b^2\n{C}^2.
\end{align*}
The proofs for $ (\A{Z},Z)$ and $ (\A{Z}, \A Z)$ follow analogous reasoning.
\end{proof}
\begin{lemma}\label{lem-3.2}
    Let $ C\in S_c, Z\in S_z $, it follows that
\begin{align*}
    \inner{\delta Z,C}=-(Z,\delta C). 
\end{align*}
\end{lemma}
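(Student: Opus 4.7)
The plan is to prove the identity by a direct summation-by-parts calculation: expand both sides using the explicit stencil definition of $\delta$ on the two grid types, reindex each term so that $Z$ is evaluated at a common half-integer point, and then regroup to recognize the stencil of $\delta$ acting on $C$. Periodicity (encoded in \eqref{boundary-1}--\eqref{boundary-2}) will make the boundary contributions from the index shifts cancel.

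Concretely, I would start from
\begin{align*}
\langle \delta Z, C\rangle = \sum_{i=1}^{J}\bigl(b_{2}Z_{i+\frac{3}{2}} + b_{1}Z_{i+\frac{1}{2}} - b_{1}Z_{i-\frac{1}{2}} - b_{2}Z_{i-\frac{3}{2}}\bigr)C_{i},
\end{align*}
which comes from the definition of $\delta$ with $\lambda = 0$ (mapping $S_z \to S_c$) together with the $h \cdot (1/h)$ cancellation in the inner product. I would then perform the substitutions $i \mapsto i-1,\; i,\; i+1,\; i+2$ in the four terms so that every $Z$ carries the index $i+\frac{1}{2}$, applying periodicity to absorb the endpoint adjustments. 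This yields
\begin{align*}
\langle \delta Z, C\rangle = \sum_{i=0}^{J-1} Z_{i+\frac{1}{2}}\bigl(b_{2}C_{i-1} + b_{1}C_{i} - b_{1}C_{i+1} - b_{2}C_{i+2}\bigr).
\end{align*}
Factoring out $-1$ and recognizing the bracket as $h\,(\delta C)_{i+\frac{1}{2}}$, that is, the definition of $\delta$ with $\lambda = \frac{1}{2}$ applied to $C \in S_c$, gives $-\bigl(Z,\delta C\bigr)$, as required.

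The only real subtlety is the bookkeeping when shifting summation indices: after each shift the range is not quite $\{0,\ldots,J-1\}$, and a handful of wrap-around terms involving $Z_{-\frac{1}{2}}, Z_{\frac{1}{2}}, Z_{J-\frac{1}{2}}, Z_{J+\frac{1}{2}}, Z_{J+\frac{3}{2}}$ (and the corresponding $C$ values) appear. These pair up with the periodicity relations $Z_{-\frac{1}{2}}=Z_{J-\frac{1}{2}}$, $Z_{J+\frac{1}{2}}=Z_{\frac{1}{2}}$, $Z_{J+\frac{3}{2}}=Z_{\frac{3}{2}}$ and $C_{-1}=C_{J-1}$, $C_{J+1}=C_{1}$, $C_{J+2}=C_{2}$, and cancel in a manner entirely analogous to the cancellations already carried out in \eqref{eq-mass_2}. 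Once this reindexing is handled cleanly, the identity follows with no further ingredient.
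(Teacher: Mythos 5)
Your proposal is correct and follows essentially the same route as the paper's own proof: expand $\langle \delta Z, C\rangle$ via the stencil, shift indices so every $Z$ carries the index $i+\frac{1}{2}$, cancel the wrap-around terms using the periodic extensions, and recognize the resulting bracket as $-h\,(\delta C)_{i+\frac{1}{2}}$. The reindexing bookkeeping you flag as the only subtlety is exactly what the paper's displayed computation carries out term by term.
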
    
\begin{proof}
By the definitions of inner products and periodic conditions, we obtain
\begin{align*}
    \inner{\delta Z,C}&=\sum_{i=1}^{J}\frac{1}{h}(b_2Z_{i+\frac{3}{2}}+b_1Z_{i+\frac{1}{2}}-b_1Z_{i-\frac{1}{2}}-b_2Z_{i-\frac{3}{2}})
   C_ih\\
    &=\sum_{i=0}^{J-1}b_2Z_{i+\frac{1}{2}}C_{i-1} + b_2( -Z_{\frac{1}{2}}C_{-1}- Z_{\frac{3}{2}}C_{0} +Z_{J+\frac{1}{2}}C_{J-1}+Z_{J+\frac{3}{2}}C_{J})-\sum_{i=0}^{J-1}b_1Z_{i+\frac{1}{2}}C_{i+1}.\\
    & \quad +\sum_{i=0}^{J-1}b_1Z_{i+\frac{1}{2}}C_{i}+b_1(-Z_{\frac{1}{2}}C_{0}+Z_{J+\frac{1}{2}}C_{J} )  
    -\sum_{i=0}^{J-1} b_2Z_{i+\frac{1}{2}}C_{i+2}-b_2(Z_{-\frac{1}{2}}C_{1} -Z_{J-\frac{1}{2}}C_{J+1})\\
    &=-\frac{1}{h}\sum_{i=0}^{J-1}(b_2C_{i+2}+b_1C_{i+1}-b_1C_{i}-b_2C_{i-1})Z_{i+\frac{1}{2}}h
    =-(Z,\delta C), 
 \end{align*}   
 which completes the proof. 
\end{proof}
\begin{lemma}\label{lem-3.3}
     Let $ C\in S_c$ and $Z\in S_z$, under the condition of periodic boundaries, the operators
     $\A $ and $\A^{-1}$ (or $\A^* $, ${\A^*}^{-1}$ )  are commutative and positive definite symmetric operators,
      We then have that
\begin{align*}
    \frac{1}{R_b^2}\n{C}^2\leq \n{\A^{-1}C}^2\leq \frac{1}{R_a^2}\n{C}^2, \quad \frac{1}{R_b^2}\3{Z}^2\leq \3{\A^{-1}Z}^2\leq \frac{1}{R_a^2}\3{Z}^2.
\end{align*}
\end{lemma}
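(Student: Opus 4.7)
The plan is to reduce Lemma \ref{lem-3.3} entirely to Lemma \ref{lem-3.1}, using the fact that under periodic boundary conditions the operator $\A$ is symmetric and positive definite, hence invertible, and the inverse shares these properties. I would organize the argument in three short steps.

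First I would verify that $\A$ is self-adjoint on $S_c$. Writing out $\langle \A C, D\rangle$ explicitly with the symmetric stencil coefficients $a_{-2}=a_2$, $a_{-1}=a_1$ and invoking the periodic identifications \eqref{boundary-1}, each ``cross'' summation $\sum_i C_{i+k}D_i h$ re-indexes cleanly to $\sum_i C_i D_{i-k}h$ without boundary residue. Pairing the $a_1$ and $a_{-1}$ terms (respectively the $a_2$ and $a_{-2}$ terms) then yields $\langle \A C, D\rangle = \langle C, \A D\rangle$. The same calculation applies verbatim to $Z, U\in S_z$, giving $(\A Z, U) = (Z, \A U)$. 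Combined with the stability condition $a_0 - 2|a_1| - 2|a_2|>0$, Lemma \ref{lem-3.1} gives $\langle \A C, C\rangle \geq R_a \|C\|^2>0$ for $C\neq 0$, so $\A$ is symmetric positive definite and thus invertible; its inverse $\A^{-1}$ is then automatically symmetric positive definite as well. Commutativity of $\A$ with $\A^{-1}$ is immediate.

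Next I would obtain the norm bounds by the substitution trick. Set $D := \A^{-1} C \in S_c$, so that $C = \A D$. Applying the second pair of inequalities in Lemma \ref{lem-3.1} to $D$,
\begin{equation*}
R_a^2 \|D\|^2 \;\le\; \langle \A D, \A D\rangle \;=\; \|C\|^2 \;\le\; R_b^2 \|D\|^2,
\end{equation*}
which rearranges to
\begin{equation*}
\frac{1}{R_b^2}\|C\|^2 \;\le\; \|\A^{-1}C\|^2 \;\le\; \frac{1}{R_a^2}\|C\|^2.
\end{equation*}
The corresponding statement for $Z\in S_z$ follows by setting $W := \A^{-1} Z$, writing $Z = \A W$, and invoking the analogous bound $R_a^2 \3{W}^2 \le (\A W, \A W) \le R_b^2 \3{W}^2$ from Lemma \ref{lem-3.1}.

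No step looks genuinely difficult: the only part demanding care is the periodic re-indexing used to show self-adjointness, because the stencil reaches two points on each side and one must confirm that all wrap-around terms cancel in pairs (this is where the fully symmetric coefficients $a_{\pm 1}$, $a_{\pm 2}$ and the identifications \eqref{boundary-1}--\eqref{boundary-2} are used simultaneously). Once symmetry is in hand, positive definiteness and the spectral bounds are straight consequences of Lemma \ref{lem-3.1}, and the claimed inequalities follow by the one-line substitution above.
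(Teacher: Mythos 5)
Your proposal is correct and follows essentially the same route as the paper, which simply asserts that the result ``follows directly'' from the symmetry of the matrix representation of $\mathcal{A}$; your argument fills in exactly the details the paper omits, namely the periodic re-indexing for self-adjointness, positive definiteness via Lemma \ref{lem-3.1}, and the substitution $C=\A D$ with $D=\A^{-1}C$ applied to the bound $R_a^2\n{D}^2\le\inner{\A D,\A D}\le R_b^2\n{D}^2$. No gaps.
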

\begin{proof}
Since the matrix representation of \(\mathcal{A}\) is symmetric, the result follows directly.
\end{proof}
\begin{lemma}\label{lem-3.4}
 Let $ g\in S_c$, $ R_d\coloneqq 4d_1^2+4d_2^2+8| d_1d_2 |$ , we have 
\begin{align*}
    \n{\mathcal{H} g}^2 \leq \frac{R_d}{h^2}\n{{g}}^2.
\end{align*}
\end{lemma}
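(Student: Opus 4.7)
The plan is to prove the bound by expanding $\|\mathcal{H}g\|^{2}$ termwise, estimating the cross-products with the elementary inequality $2|ab|\le a^{2}+b^{2}$, and then invoking periodicity to reindex. Using the definitions,
\[
\|\mathcal{H}g\|^{2}=\sum_{i=1}^{J}h(\mathcal{H}g_{i})^{2}=\frac{1}{h}\sum_{i=1}^{J}\bigl(d_{2}g_{i+2}+d_{1}g_{i+1}-d_{1}g_{i-1}-d_{2}g_{i-2}\bigr)^{2},
\]
so the first step is to expand the square into four squared terms $d_{2}^{2}g_{i\pm2}^{2}$, $d_{1}^{2}g_{i\pm1}^{2}$ and six cross-products.

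Next I would bound each cross term by $2|ab|\le a^{2}+b^{2}$, e.g.\ $|2d_{1}d_{2}g_{i+2}g_{i+1}|\le|d_{1}d_{2}|(g_{i+2}^{2}+g_{i+1}^{2})$, and similarly for the other five pairs. Collecting the contributions to each squared value $g_{i+k}^{2}$ ($k\in\{-2,-1,1,2\}$), I find that $g_{i\pm 2}^{2}$ each receive a coefficient of $2d_{2}^{2}+2|d_{1}d_{2}|$ and $g_{i\pm 1}^{2}$ each receive $2d_{1}^{2}+2|d_{1}d_{2}|$, which leads to
\[
\bigl(d_{2}g_{i+2}+d_{1}g_{i+1}-d_{1}g_{i-1}-d_{2}g_{i-2}\bigr)^{2}\le\bigl(2d_{2}^{2}+2|d_{1}d_{2}|\bigr)\bigl(g_{i+2}^{2}+g_{i-2}^{2}\bigr)+\bigl(2d_{1}^{2}+2|d_{1}d_{2}|\bigr)\bigl(g_{i+1}^{2}+g_{i-1}^{2}\bigr).
\]

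Then I would sum over $i=1,\dots,J$ and use the periodic boundary conditions \eqref{boundary-1} to conclude that each shift satisfies $\sum_{i=1}^{J}g_{i+k}^{2}=\sum_{i=1}^{J}g_{i}^{2}$. Thus the right-hand side telescopes into $(4d_{1}^{2}+4d_{2}^{2}+8|d_{1}d_{2}|)\sum_{i=1}^{J}g_{i}^{2}=R_{d}\sum_{i=1}^{J}g_{i}^{2}=(R_{d}/h)\|g\|^{2}$. Dividing by $h$ as required by the prefactor yields $\|\mathcal{H}g\|^{2}\le(R_{d}/h^{2})\|g\|^{2}$.

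The proof is essentially mechanical, so there is no conceptual obstacle; the only thing requiring care is the bookkeeping of the six cross terms and correctly tallying the coefficients for each shifted value $g_{i+k}^{2}$, since a miscount would corrupt the constant $R_{d}$. Ensuring that periodicity is applied only after the pointwise estimate, and that the $1/h$ factor from the definition of $\mathcal{H}$ is tracked correctly alongside the $h$ weight in the discrete $L^{2}$ norm, are the places I would double-check.
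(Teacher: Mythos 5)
Your proposal is correct and follows essentially the same route as the paper: expand the square, control the six cross terms via $2|ab|\le a^2+b^2$, and use periodicity to reindex the shifted sums; your coefficient tally ($2d_2^2+2|d_1d_2|$ for $g_{i\pm2}^2$ and $2d_1^2+2|d_1d_2|$ for $g_{i\pm1}^2$) correctly reproduces $R_d$. The only cosmetic difference is that the paper applies the Cauchy--Schwarz bound to each cross-product sum after summing over $i$, whereas you apply the pointwise inequality first and sum afterwards; the two are equivalent here.
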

\begin{proof}
Using the Cauchy-Schwarz inequality and \eqref{boundary-1}, it directly follows that
\begin{align*}
    &\n{\mathcal{H}  g}^2 =\inner{\mathcal{H} g,\mathcal{H} g}\\
    =&\sum_{i=1}^{J}\frac{1}{h^2}(d_2g_{i+2}+d_1g_{i+1}-d_1g_{i-1}-d_2g_{i-2})^2h\\
    \leq & \frac{1}{h^2}\left(\sum_{i=1}^{J}  \left(d_2^2g^2_{i+2}h+  d_1^2g^2_{i+1}h+ d_1^2g^2_{i-1}h+ d_2^2g^2_{i-2}h \right)
    + \left|\sum_{i=1}^{J} 2 d_1d_2 g_{i+2}g_{i+1}h\right|+\left|\sum_{i=1}^{J} 2d_1d_2g_{i+2}g_{i-1}h\right| \right.\\
    &\left. +2d_2^2\left|\sum_{i=1}^{J} g_{i+2}g_{i-2}h\right|+2d_1^2\left|\sum_{i=1}^{J} g_{i+1}g_{i-1}h\right|+ \left|\sum_{i=1}^{J} 2 d_1d_2g_{i+1}g_{i-2}h\right|+\left|\sum_{i=1}^{J}2d_1d_2 g_{i-1}g_{i-2}h\right|\right)\\
    \leq&\frac{1}{h^2}( 4d_1^2+4d_2^2+8| d_1d_2 |)\n{g}^2= \frac{R_d}{h^2}\n{g}^2.
 \end{align*} 
\end{proof}
\begin{lemma}[{\cite{eymard2006combined}}]\label{lem-yinyong}
     Let $\Upsilon(s) $, $s\in(0,x_R)$ be defined by
\begin{equation}
\Upsilon (s)=\psi(s) s-\int_0^s \psi(\tau) d \tau,
\end{equation}
with $\psi$ satisfying properties \eqref{eq-PP}. Then there exists a positive constant $K_*$ such that $\Upsilon (s) \geq \frac{1}{2} K_* s^2$.
\end{lemma}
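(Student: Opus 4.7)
The plan is to exploit the definition of $\Upsilon(s)$ together with the uniform monotonicity hypothesis (H4) by rescaling the dummy variable in the integral. First I would substitute $\tau = ts$ with $t \in [0,1]$ in $\int_0^s \psi(\tau)\,d\tau$, which is a change of variables valid regardless of the sign of $s$. This turns the definition into the compact identity
\[
\Upsilon(s) \;=\; s\,\psi(s) - s\!\int_0^1 \psi(ts)\,dt \;=\; s \int_0^1 \bigl(\psi(s) - \psi(ts)\bigr)\,dt,
\]
reducing the problem to a pointwise estimate of $s\bigl(\psi(s)-\psi(ts)\bigr)$ from below by something comparable to $s^2$.

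Next I would rephrase (H4) in the equivalent product form by multiplying through by $(a-b)^2 > 0$, giving
\[
\bigl(\psi(a)-\psi(b)\bigr)(a-b) \;\geq\; (a-b)^2 \qquad \text{for all } a \neq b.
\]
Applying this with $a = s$ and $b = ts$ yields $\bigl(\psi(s)-\psi(ts)\bigr)\,s(1-t) \geq s^2(1-t)^2$, and dividing through by the nonnegative factor $(1-t)$ for $t \in [0,1)$ produces the clean pointwise bound $s\bigl(\psi(s) - \psi(ts)\bigr) \geq s^2(1-t)$. Integrating over $t \in [0,1]$ then delivers
\[
\Upsilon(s) \;\geq\; s^2 \int_0^1 (1-t)\,dt \;=\; \tfrac{1}{2} s^2,
\]
so the conclusion holds with $K_* = 1$. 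The case $s = 0$ is immediate from $\psi(0)=0$.

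The main subtlety, more cosmetic than substantive, is avoiding a case split on the sign of $s$ while remaining rigorous. If one were to use (H4) directly in its quotient form $\frac{\psi(s)-\psi(ts)}{s-ts} \geq 1$ and then multiply by $s-ts$, the inequality would need to be flipped for $s < 0$, producing an awkward two-case argument. Passing to the product form once and for all sidesteps this completely and also means we never need to assume any regularity of $\psi$ beyond what (H4) already provides (in particular no differentiability, so the argument is valid for the degenerate Freundlich isotherm allowed by the paper's hypotheses).
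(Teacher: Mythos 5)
Your argument is correct and complete, and it yields the explicit constant $K_*=1$. Note, however, that the paper itself supplies no proof of this lemma: it is stated with a citation to \cite{eymard2006combined} and used as a black box, so there is no in-paper argument to compare against. Your proof matches the standard one from that reference in spirit --- the key identity is $\Upsilon(s)=\int_0^s\bigl(\psi(s)-\psi(\tau)\bigr)\,d\tau$, after which the uniform monotonicity \eqref{eq-PP} in product form gives $\bigl(\psi(s)-\psi(\tau)\bigr)(s-\tau)\ge (s-\tau)^2$ and hence $\Upsilon(s)\ge\int_0^s(s-\tau)\,d\tau=\tfrac12 s^2$. Your rescaling $\tau=ts$ is a tidy way to make this sign-robust in one pass; since the lemma only asserts the bound for $s\in(0,x_R)$, the case split you are avoiding would in any event be unnecessary, but the extra generality costs nothing. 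One cosmetic point: the hypothesis in \eqref{eq-PP} is stated for all $a\ne b$ in $\mathbb{R}$, so your product-form inequality is legitimately available without any additional regularity of $\psi$, exactly as you observe.
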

\begin{theorem}\label{thm-3.2}
(Stability) Let  $ C$  and $Z$ satisfy \eqref{eq-dis1}-\eqref{eq-dis2}. Then, for  $\Delta t\leq \tau_0 $ and $ h\leq h_0 $ , where $\tau_0, h_0 $  and $ K$ are positive constants, the stability estimate is valid
\begin{align*}
    \n{C^{N}}^2+\3{Z^{N}}^2 \le K(\n{\Psi ^0}^2+\3{Z ^0}^2+\sum_{n=0}^{N} \Delta t\n{f^{n}}^2).
\end{align*}  
\end{theorem}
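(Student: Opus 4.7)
The plan is to first apply $\A^{-1}$---which exists and is symmetric positive definite by Lemmas~\ref{lem-3.1} and~\ref{lem-3.3}---to both sides of \eqref{eq-dis1}, removing the averaging operator in front of the time derivative and yielding
\begin{equation*}
\partial_t C^{n+1} + \partial_t \Phi^{n+1} + \A^{-1}\delta Z^{n+1} = -{\A^*}^{-1}\mathcal{H}(uC)^n + f^{n+1}.
\end{equation*}
I then take the discrete inner product $\inner{\cdot,\cdot}$ of this identity with $C^{n+1}$ to obtain an energy equality, and estimate the four resulting terms one by one.

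The nonlinear time derivative $\inner{\partial_t(C^{n+1}+\Phi^{n+1}),C^{n+1}}=\inner{\partial_t\Psi^{n+1},C^{n+1}}$ is handled via the monotonicity of $\psi$ from (H4): a standard convexity computation gives the pointwise inequality $(\psi(a)-\psi(b))\,a \geq \Upsilon(a)-\Upsilon(b)$, so that space-summation produces $\inner{\partial_t\Psi^{n+1},C^{n+1}}\geq \tfrac{1}{\Delta t}\sum_i h[\Upsilon(C_i^{n+1})-\Upsilon(C_i^n)]$. Summed in $n$ this telescopes. Lemma~\ref{lem-yinyong} then converts $\sum_i h\,\Upsilon(C_i^N)$ into a lower bound on $\n{C^N}^2$, while the initial contribution is controlled by $\n{\Psi^0}^2$ using $\Upsilon(c)\leq c\psi(c)\leq \psi(c)^2$ (the first inequality uses $\psi(0)=0$ and monotonicity, the second uses $c\leq\psi(c)$, which follows from \eqref{eq-PP} together with (H2)).

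For the flux term I would exploit that under periodic BC both $\A^{-1}$ and $\delta$ are translation-invariant and hence commute, $\delta\A^{-1}=\A^{-1}\delta$. Combining this with self-adjointness of $\A^{-1}$, Lemma~\ref{lem-3.2}, and the flux equation \eqref{eq-dis2} rewritten as $\A^{-1}\delta C^{n+1}=-(Z/D)^{n+1}$, one obtains
\begin{equation*}
\inner{\A^{-1}\delta Z^{n+1},C^{n+1}}=-\bigl(Z^{n+1},\A^{-1}\delta C^{n+1}\bigr)=\bigl(Z^{n+1},(Z/D)^{n+1}\bigr)\geq \frac{1}{D^*}\3{Z^{n+1}}^2,
\end{equation*}
invoking $D\leq D^*$ from (H3). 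This is the step that places a $\3{Z^{n+1}}^2$ contribution on the left-hand side, ultimately delivering the flux part of the target estimate.

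The principal obstacle is the convection term: Lemmas~\ref{lem-3.3} and~\ref{lem-3.4} combined with boundedness of $u$ give only $\n{{\A^*}^{-1}\mathcal{H}(uC)^n}\leq (\sqrt{R_d}\,\|u\|_\infty/(R_a^* h))\n{C^n}$, so a Cauchy--Schwarz splitting with a small parameter produces a contribution of order $(\Delta t/h)(\n{C^n}^2+\n{C^{n+1}}^2)$. This is precisely why the statement imposes $\Delta t\leq\tau_0$ and $h\leq h_0$: a CFL-type restriction on $\Delta t/h$ is needed so that the $\n{C^{n+1}}^2$ coefficient stays bounded after summation and can be absorbed by a discrete Gr\"onwall inequality. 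The source term $\inner{f^{n+1},C^{n+1}}$ is handled by routine AM--GM. Summing in $n$ from $0$ to $N-1$, telescoping the $\Upsilon$ differences, invoking discrete Gr\"onwall, and finally converting the initial data via $\sum_i h\,\Upsilon(C_i^0)\leq \n{\Psi^0}^2$ yields the advertised stability estimate.
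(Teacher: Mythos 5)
Your proposal reconstructs, essentially verbatim, the \emph{second} of the two energy identities in the paper's proof: testing the scheme with $\A^{-1}C^{n+1}$, handling the accumulation term via $\Upsilon$ and Lemma~\ref{lem-yinyong}, using commutativity, Lemma~\ref{lem-3.2} and \eqref{eq-dis2} to turn the flux term into $\bigl(Z^{n+1},(Z/D)^{n+1}\bigr)\ge \frac{1}{D^*}\3{Z^{n+1}}^2$, and closing with Cauchy--Schwarz and Gr\"onwall. All of those steps are sound, and your observations about the CFL-type role of $\tau_0,h_0$ and about $\Upsilon(c)\le c\psi(c)\le\psi(c)^2$ match the paper. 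The genuine gap is that this single identity cannot deliver the $\3{Z^{N}}^2$ term that the theorem places on the left-hand side. After multiplying by $\Delta t$ and summing in $n$, your flux estimate contributes only the time-integrated quantity $\frac{1}{D^*}\sum_{n=0}^{N-1}\Delta t\,\3{Z^{n+1}}^2$; since the summands are nonnegative, this controls the final-time flux only as $\Delta t\,\3{Z^{N}}^2\le K(\cdots)$, which is weaker than the claimed $\3{Z^{N}}^2\le K(\cdots)$ by a factor of $\Delta t$. Nothing in your argument recovers that lost factor.

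The missing idea is a second energy identity obtained by testing \eqref{eq-dis1} with the discrete time increment $\partial_t\A^{-1}C^{n+1}$ rather than with $\A^{-1}C^{n+1}$. With this test function, Lemma~\ref{lem-3.2} and \eqref{eq-dis2} convert the flux term into $\bigl(Z^{n+1},\partial_t(Z/D)^{n+1}\bigr)$, which equals $\frac{1}{2\Delta t}\bigl[(Z^{n+1},(Z/D)^{n+1})-(Z^{n},(Z/D)^{n})\bigr]$ plus a nonnegative remainder; summing in $n$ telescopes to $\frac{1}{2D^*}\3{Z^{N}}^2-\frac{1}{2D_*}\3{Z^{0}}^2$ and so puts $\3{Z^{N}}^2$ itself on the left. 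In that identity the monotone term $\inner{\partial_t\Phi^{n+1},\partial_t C^{n+1}}\ge 0$ is dropped, and the convection and source terms are absorbed into the dissipation $\sum_n\Delta t\,\n{\partial_t C^{n+1}}^2$ that the same identity generates. The paper then adds the two resulting inequalities before applying Gr\"onwall; you need both halves to obtain the stated estimate.
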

\begin{proof}
Taking the inner product of both sides of \eqref{eq-dis1} with \(\partial \mathcal{A}^{-1} C^{n+1}\), we obtain
\begin{align}\label{eq-34}
& \langle\partial_t\A C^{n+1}, \partial_t\A^{-1} C^{n+1}\rangle+\langle\partial_t\A \Phi^{n+1}, \partial_t\A^{-1} C^{n+1}\rangle+\langle\delta Z^{n+1}, \partial_t\A^{-1} C^{n+1}\rangle\notag \\
= & -\langle \A{\A^*}^{-1}\mathcal{H} (u C)^n, \partial_t\A^{-1} C^{n+1}\rangle+\langle\A f^{n+1}, \partial_t\A^{-1} C^{n+1}\rangle.
\end{align}
From equation \eqref{eq-34}, it can be readily concluded that
\begin{align}\label{eq-stability}
\n{ \partial_t C^{n+1}}^2+\langle\partial_t\Phi^{n+1}, \partial_t C^{n+1}\rangle+\langle\delta Z^{n+1}, \partial_t\A^{-1} C^{n+1}\rangle 
= -\langle {\A^*}^{-1}\mathcal{H} (u C)^n, \partial_t C^{n+1}\rangle+\langle f^{n+1}, \partial_t C^{n+1}\rangle.
\end{align}
Due to the monotonicity of \(\phi\), we have 
\begin{equation} \label{eq-sta-left1}
\langle\partial_t \Phi^{n+1}, \partial_t C^{n+1}\rangle=\frac{1}{\Delta t^2}\langle\phi\left(C^{n+1}\right)-\phi\left(C^n\right), C^{n+1}-C^n\rangle \geq 0 .
\end{equation}
By using \eqref{eq-dis2} and Lemma \ref{lem-3.2}, we calculate and organize to obtain
\begin{align}\label{eq-sta-left2}
& \langle\delta Z^{n+1}, \partial_t \A^{-1} C^{n+1}\rangle=-(Z^{n+1}, \partial_t \A^{-1} \delta C^{n+1}) =(Z^{n+1}, \partial_t(\frac{Z}{D})^{n+1})  \notag\\
= & \frac{1}{2 \Delta t}\left((Z^{n+1},\left(\frac{Z}{D}\right)^{n+1}) -(Z^n,\left(\frac{Z}{D}\right)^n) \right)+\frac{1}{2 \Delta t}(Z^{n+1}-Z^n, \frac{Z^{n+1}-Z^n}{D})  \notag\\
\geq & \frac{1}{2 \Delta t}\left((Z^{n+1},\left(\frac{Z}{D}\right)^{n+1}) -(Z^n,\left(\frac{Z}{D}\right)^n) \right) .
\end{align}
Multiplying \eqref{eq-stability} by $\Delta t$, sum over $N-1$ and using \eqref{eq-sta-left1},\eqref{eq-sta-left2}, we can  obtain
\begin{equation} \label{eq-sta-half}
\begin{aligned}
 \sum_{n=0}^{N-1} \Delta t\left\|\partial_t C^{n+1}\right\|^2+\frac{1}{2 D^*}\left(Z^N, Z^{N}\right) 
\leq  \frac{1}{2 D_*}\left(Z^0, Z^0\right)+\sum_{n=0}^{N-1} \Delta t\left|\langle \mathcal{H} (u C)^n, \partial_t {\A ^*}^{-1} C^{n+1}\rangle\right| +\sum_{n=0}^{N-1} \Delta t\left|\langle f^{n+1}, \partial_t C^{n+1}\rangle\right|.
\end{aligned}
\end{equation}
Applying Lemma \ref{lem-3.3} and \ref{lem-3.4}, the inequality $|\alpha \beta| \leq \varepsilon \alpha^2+\frac{1}{4 \varepsilon} \beta^2$ and define $L_u:=\max(u)$ we have 
\begin{align}
   & \Delta t\sum_{n=0}^{N-1}\left|\langle\mathcal{H} (u C)^n, \partial_t {\A^{*}} ^{-1} C^{n+1}\rangle\right| \leq \frac{L_u^2 R_d}{4\varepsilon h^2}\sum_{n=0}^{N-1}\Delta t\n{C^{n}}^2
    +\frac{\varepsilon}{R_a^2} \sum_{n=0}^{N-1}\Delta t\n{\pa C^{n+1}}^2\label{eq-sta-right1},\\
   &\Delta t\sum_{n=0}^{N-1}\left|\langle f^{n+1}, \partial_t C^{n+1}\rangle\right|\leq
   \frac{1}{4 \varepsilon} \sum_{n=0}^{N-1}\Delta t\n{f^{n+1}}^2+\varepsilon \sum_{n=0}^{N-1}\Delta t \n{\pa C^{n+1}}^2\label{eq-sta-right2}.
\end{align}
Substituting the estimates \eqref{eq-sta-right1},\eqref{eq-sta-right2} into \eqref{eq-sta-half}, 
\begin{align}\label{eq-41}
    (1-\frac{1+R_a^2}{R_a^2}\varepsilon)\sum_{n=0}^{N-1}\Delta t\n{C^{n+1}}^2
    + \frac{1}{2D^*}\3{Z^N}^2 \le \frac{1}{2D_*}\3{Z^0}^2+ \frac{L_u^2 R_d}{4\varepsilon h^2}\sum_{n=0}^{N-1}\Delta t\n{C^{n}}^2 +\frac{1}{4 \varepsilon} \sum_{n=0}^{N-1}\Delta t\n{f^{n+1}}^2
\end{align}
By setting \(\psi(c) = c + \phi(c)\) and defining \(\Psi_i^{n+1} = \psi(C_i^{n+1})\), \eqref{eq-dis1} can be equivalently expressed as
\begin{equation}\label{eq-sta-part2-1}
\A \partial_t \Psi_i^{n+1}+\delta Z_i^{n+1}=-\A{\A^*}^{-1}\mathcal{H} (u C)_i^n+\A f_i^{n+1}.
\end{equation}
Applying the inner product with \(\A^{-1} C^{n+1}\) to both sides of \eqref{eq-sta-part2-1} leads to
\begin{align*}
\langle\A \partial_t \Psi^{n+1}, \A^{-1} C^{n+1}\rangle+\langle\delta Z^{n+1}, \A^{-1} C^{n+1}\rangle
=-\langle\A{\A^*}^{-1}\mathcal{H} (u C)^n, \A^{-1} C^{n+1}\rangle+\langle\A f^{n+1}, \A^{-1} C^{n+1}\rangle.
\end{align*}
By the commutativity of the operators, equation~\eqref{eq-dis2}, and Lemma \ref{lem-3.2}, we obtain
\begin{equation}\label{eq-111}
\langle\partial_t \Psi^{n+1}, C^{n+1}\rangle+( Z^{n+1}, \left(\frac{Z}{D}\right)^{n+1})=-\langle\mathcal{H} ( u C)^n, {\A^*}^{-1}  C^{n+1}\rangle+\langle f^{n+1}, C^{n+1}\rangle.
\end{equation}
It follows from Lemma~\ref{lem-yinyong} that the following relation holds
\begin{equation}
\Upsilon \left(C_i^{n+1}\right)-\Upsilon \left(C_i^n\right)=\left(\psi\left(C_i^{n+1}\right)-\psi\left(C_i^n\right)\right) C_i^{n+1}-\int_{C_i^n}^{C_i^{n+1}}\left(\psi(\tau)-\psi\left(C_i^n\right)\right) d \tau,
\end{equation}
where the integrand is nonnegative due to $\psi$ being nondecreasing. It follows that
\begin{equation}\label{eq-Upsilon}
\Upsilon \left(C_i^{n+1}\right)-\Upsilon \left(C_i^n\right)\le\left(\psi\left(C_i^{n+1}\right)-\psi\left(C_i^n\right)\right) C_i^{n+1}.
\end{equation}
which yields
\begin{equation}\label{eq-sta11}
\langle\partial_t \Psi^{n+1}, C^{n+1}\rangle = \frac{1}{\Delta t}\inner{ \Psi^{n+1}- \Psi^{n},C^{n+1}}
\geq \frac{1}{\Delta t} \sum_{i=1}^J\left(\Upsilon \left(C_i^{n+1}\right)-\Upsilon \left(C_i^n\right)\right) h.
\end{equation}
Since $D_*<D<D^*$, the following result is obtained
\begin{align}\label{eq-sta22}
( Z^{n+1}, \left(\frac{Z}{D}\right)^{n+1})\ge \frac{1}{D^*}\3{Z^{n+1}}^2.
\end{align}
Multiplying \eqref{eq-111} by $\Delta t$, summing on $N-1$ and using \eqref{eq-sta11}, \eqref{eq-sta22} we obtain
\begin{align}\label{eq-sta33}
    \sum_{n=0}^{N-1}\sum_{i=1}^J\left(\Upsilon \left(C_i^{n+1}\right)-\Upsilon \left(C_i^n\right)\right) h + \frac{1}{D^*}\sum_{n=0}^{N-1}\3{Z^{n+1}}^2\Delta t 
    \leq \sum_{n=0}^{N-1}\left|\langle\mathcal{H} ( u C)^n, {\A^*}^{-1}  C^{n+1}\rangle\right|\Delta t + \sum_{n=0}^{N-1}\left|\langle f^{n+1}, C^{n+1}\rangle\right|\Delta t.
\end{align}
According to Lemma \ref{lem-yinyong}, we obtain
\begin{equation}\label{eq-sta44}
\frac{1}{2} K_* C^2 \leq \Upsilon (C) \leq \Psi(C) C \leq(\Psi(C))^2.
\end{equation}
Using \eqref{eq-sta44}, Lemma \ref{lem-3.4} and the inequality $|\alpha \beta| \leq \varepsilon \alpha^2+\frac{1}{4 \varepsilon} \beta^2$, we organize equation \eqref{eq-sta33}
\begin{align}\label{eq-sta-all2}
    \frac{1}{2}K_*\n{C^N}^2 \leq \n{\Psi ^0}^2+\frac{L_u^2 R_d}{4\varepsilon h^2}\sum_{n=0}^{N-1}\Delta t\n{C^{n}}^2
    +\frac{1+R_a^2}{R_a^2}\varepsilon \sum_{n=0}^{N-1}\Delta t\n{ C^{n+1}}^2
   +\frac{1}{D^*}\sum_{n=0}^{N-1}\3{Z^{n+1}}^2\Delta t+ \frac{1}{4 \varepsilon} \sum_{n=0}^{N-1}\Delta t\n{f^{n+1}}^2.
\end{align}
By summing equations~\eqref{eq-41} and~\eqref{eq-sta-all2}, and choosing \(\varepsilon\) sufficiently small such that
$1 - \frac{1 + R_a^2}{R_a^2} \varepsilon \ge 0$,
we can then apply Gronwall’s inequality to obtain the estimate:
\begin{equation}
\left\|C^{N}\right\|^2+\3{Z^{N}}^2 \leq K\left(\left\|\Psi^0\right\|^2+{\3{Z^0}}^2+\sum_{n=0}^N \Delta t\left\|f^{n+1}\right\|^2\right) .
\end{equation}
This completes the proof.
\end{proof}
\subsection{Convergence Analysis}
We now analyze convergence and error estimates for the scheme. By Taylor’s expansion, it follows that
\begin{align}
& \A \partial_t c_i^{n+1}+\A \partial_t \phi_i^{n+1}+\delta z_i^{n+1}=- \A{\A^*}^{-1}\mathcal{H} (u c)_i^n+\A f_i^{n+1}+R_i^{1, n+1},\label{eq-con1}\\
& \delta c_{i+1 / 2}^{n+1}+\mathcal{A}\left(\frac{z}{D}\right)_{i+1 / 2}^{n+1}=R_{i+1 / 2}^{2, n+1}, \label{eq-con2}
\end{align}
where $R_i^{1, n+1}=\mathcal{O}\left(h^s+\Delta t\right), R_{i+1 / 2}^{2, n+1}=\mathcal{O}\left(h^s\right)$ with $s=4,6,8$.
\par
Define $\eta=C-c$, $\xi=Z-z$.  Using  \eqref{eq-dis1}-\eqref{eq-dis2} and \eqref{eq-con1}- \eqref{eq-con2}, the error equations are obtained as follows
\begin{align}
& \A \partial_t \eta_i^{n+1}+\A \partial_t\left(\phi\left(C_i^{n+1}\right)-\phi\left(c_i^{n+1}\right)\right)+\delta \xi_i^{n+1}=-\A{\A^*}^{-1}\mathcal{H} (u \eta)_i^n+R_i^{1, n+1},\label{eq-con11} \\
& \delta \eta_{i+1 / 2}^{n+1}+\A\left(\frac{\xi}{D}\right)_{i+1 / 2}^{n+1}=R_{i+1 / 2}^{2, n+1}.\label{eq-con22}
\end{align}
\begin{theorem}
Assume $c,z\in C^2((0,T];C^{s+1}(0,x_R))$ are the exact solutions of \eqref{eq-bc} with s=4,6,8. Let C, Z be the solutions of the scheme given by equations \eqref{eq-dis1}–\eqref{eq-dis2}, and there exists a constant $K > 0$ such that the inequality below holds
\begin{equation}
\sum_{n=0}^{N} \Delta t\left\|(C-c)^{n+1}\right\|^2 \leq K\left(h^{2s}+\Delta t^2\right).
\end{equation}
\end{theorem}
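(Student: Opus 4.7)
The plan is to adapt the two-step energy argument of Theorem~\ref{thm-3.2} to the error equations \eqref{eq-con11}–\eqref{eq-con22}, since these have exactly the same structural form as the scheme itself, with the nonlinear monotone map $\phi$ still in play and with the truncation residuals $R^{1,n+1}=\mathcal{O}(h^s+\Delta t)$ and $R^{2,n+1}=\mathcal{O}(h^s)$ now playing the role of data. Crucially, $\eta^0=0$ by \eqref{eq-int}, so all initial-data contributions vanish, and the target rate will come entirely from the summed $\Delta t$-weighted $L^2$ norms of these residuals, which are $\mathcal{O}(h^{2s}+\Delta t^2)$.

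The first estimate mirrors the part of the stability proof culminating in \eqref{eq-41}. I would test \eqref{eq-con11} with $\pa \A^{-1}\eta^{n+1}$. By the commutativity and self-adjointness of $\A, \A^{-1}$ from Lemma~\ref{lem-3.3}, the leading term reduces to $\n{\pa\eta^{n+1}}^2$. The nonlinear contribution $\inner{\pa(\phi(C^{n+1})-\phi(c^{n+1})), \pa\eta^{n+1}}$ is nonnegative by the monotonicity in (H1), exactly as in \eqref{eq-sta-left1}. For the flux coupling I would invoke Lemma~\ref{lem-3.2} to pass $\delta$ onto $\A^{-1}\eta^{n+1}$ and then substitute $\delta\eta^{n+1}=-\A(\xi/D)^{n+1}+R^{2,n+1}$ from \eqref{eq-con22}; this produces the telescoping $(Z/D)$-type quantity for $\xi$ plus a remainder involving $R^{2,n+1}$ that is absorbed by Young's inequality. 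The right-hand terms $\A {\A^*}^{-1}\mathcal{H}(u\eta)^n$ and $R^{1,n+1}$ are bounded using Lemmas~\ref{lem-3.3}–\ref{lem-3.4} and $|\alpha\beta|\le\varepsilon\alpha^2+\frac{1}{4\varepsilon}\beta^2$, exactly as in \eqref{eq-sta-right1}–\eqref{eq-sta-right2}. Summing in $n$ yields the error analogue of \eqref{eq-41}, controlling $\sum \Delta t\,\n{\pa\eta^{n+1}}^2$ and $\3{\xi^N}^2$ in terms of $\sum \Delta t\,\n{\eta^n}^2$ plus $\mathcal{O}(h^{2s}+\Delta t^2)$.

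The second estimate handles pointwise-in-$n$ control of $\n{\eta^{n+1}}$. Writing the error system in $\psi=c+\phi(c)$ form and testing with $\A^{-1}\eta^{n+1}$, I would invoke Lemma~\ref{lem-yinyong} applied to the two-point difference $\Upsilon(C_i^{n+1})-\Upsilon(c_i^{n+1})$ combined with the uniform monotonicity \eqref{eq-PP} of $\psi$, which converts $\inner{\pa(\psi(C^{n+1})-\psi(c^{n+1})), \eta^{n+1}}$ into a telescoping quantity bounded below by $\tfrac{1}{2}K_*\,\n{\eta^{n+1}}^2$-increments. Combining with $(\xi^{n+1},\xi^{n+1}/D)\ge\3{\xi^{n+1}}^2/D^*$ and again bounding the $\mathcal{H}(u\eta)^n$ and $R^{1,n+1}, R^{2,n+1}$ terms by Young's inequality, summation gives the analogue of \eqref{eq-sta-all2}. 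Adding the two estimates with $\varepsilon$ chosen so that $1-(1+R_a^2)\varepsilon/R_a^2\ge 0$, and applying discrete Gronwall to absorb $\sum \Delta t\,\n{\eta^n}^2$, yields $\sum_{n=0}^{N}\Delta t\,\n{\eta^{n+1}}^2\le K(h^{2s}+\Delta t^2)$.

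The main obstacle is the $h^{-2}$ factor that Lemma~\ref{lem-3.4} introduces when bounding $\n{\mathcal{H}(u\eta)^n}$: if naively propagated through Gronwall, the exponent $K$ would blow up as $h\to 0$, destroying the claimed rate. The resolution is that this factor only multiplies $\n{\eta^n}^2$ (not $\n{\pa\eta}^2$), while the $\n{\pa\eta^{n+1}}^2$ piece is absorbed on the left; so under a mild relation between $\Delta t$ and $h$ (or by using the anti-symmetry of $\mathcal{H}$ under periodic BC to move one discrete derivative onto the smooth factor $u$ via discrete summation-by-parts, bounding $\n{\mathcal{H}(u\eta)^n}$ without the $1/h$ loss), the Gronwall constant remains bounded on $[0,T]$. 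Verifying this sharper bound on the convective term, and tracking the truncation residual $R^{2,n+1}$ through the discrete-in-time manipulation of the flux identity, are the only delicate steps; everything else is mechanical bookkeeping parallel to Theorem~\ref{thm-3.2}.
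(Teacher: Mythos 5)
Your overall plan---energy estimates on the error equations \eqref{eq-con11}--\eqref{eq-con22}, absorb the residuals, Gronwall---is the right family of argument, but the specific route you chose (transplanting the \emph{two-step} structure of Theorem~\ref{thm-3.2}) breaks down at the nonlinear term, and the paper's proof is structured precisely to avoid this. In your first estimate you test with $\pa\A^{-1}\eta^{n+1}$ and claim that $\inner{\pa(\phi(C^{n+1})-\phi(c^{n+1})),\,\pa\eta^{n+1}}\ge 0$ by monotonicity. This is false in general: monotonicity controls $\inner{\phi(a)-\phi(b),\,a-b}$, i.e.\ a difference between two states paired with the \emph{same} difference, whereas your pairing is a mixed second difference --- a time increment of the solution-difference of $\phi$ against a time increment of $\eta$. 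For a nonlinear monotone $\phi$ (take $\phi(x)=x^3$ with $C^{n+1}=1$, $c^{n+1}=0$, $C^n=10$, $c^n=9.9$) this quantity is negative. The same structural problem sinks your second estimate: Lemma~\ref{lem-yinyong} and the telescoping inequality \eqref{eq-Upsilon} are one-trajectory devices, built on $\Upsilon(C_i^{n+1})-\Upsilon(C_i^{n})$ for a single discrete solution; there is no analogous identity lower-bounding $\inner{\pa(\psi(C^{n+1})-\psi(c^{n+1})),\,\eta^{n+1}}$ by increments of a quantity comparable to $\n{\eta}^2$ under hypotheses (H1)/(H4) alone.

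The paper's proof avoids both difficulties with a single, simpler estimate: it first \emph{sums the error equation in time} (multiply \eqref{eq-con11} by $\Delta t$ and sum over $k\le n$, using $\eta^0=0$), so that the nonlinear term appears un-differenced as $\A(\phi(C^{n+1})-\phi(c^{n+1}))$; testing with $\A^{-1}\eta^{n+1}$ then pairs it as $\inner{\phi(C^{n+1})-\phi(c^{n+1}),\,C^{n+1}-c^{n+1}}\ge 0$, where plain monotonicity suffices and neither $\Upsilon$ nor the uniform bound \eqref{eq-PP} is needed. The accumulated flux term $\sum_k\Delta t\,\delta\xi^{k+1}$ is then handled by the algebraic identity \eqref{eq-inequality} together with \eqref{eq-con22}, which is the step your sketch does not anticipate. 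Your closing observation about the $h^{-2}$ factor from Lemma~\ref{lem-3.4} is a fair concern, but note that the paper's own Gronwall constant in \eqref{eq-con-last} carries the same factor; the summation-by-parts fix you suggest is not what the paper does. To repair your proposal you would either need to adopt the time-summed formulation, or supply a genuinely new two-solution analogue of \eqref{eq-sta-left1} and of Lemma~\ref{lem-yinyong}, which does not exist under the stated assumptions.
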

\begin{proof}
    Let $C^0_i = c^0(x_i)$, which implies that $ \eta^0 = 0$, $ \phi(C^0)-\phi(c^0) = 0$. By multiplying equation  \eqref{eq-con11} by $\Delta t$ and summing on n, we obtain
\begin{equation}\label{eq-con-33}
\A \eta_i^{n+1}+\A\left(\phi\left(C_i^{n+1}\right)-\phi\left(c_i^{n+1}\right)\right)+\sum_{k=0}^n \Delta t \delta \xi_i^{k+1}=-\sum_{k=0}^n \Delta t\A{\A^*}^{-1}\mathcal{H} (u \eta)_i^k+\sum_{k=0}^n \Delta t R_i^{1, k+1}.
\end{equation}
By taking the inner product of \eqref{eq-con-33} with $ \A^{-1}\eta^{n+1}$, we get
\begin{align}\label{eq-con-all1}
& \langle\eta^{n+1}, \eta^{n+1}\rangle+\langle\phi\left(C^{n+1}\right)-\phi\left(c^{n+1}\right), \eta^{n+1}\rangle+\langle\sum_{k=0}^n \Delta t \delta \xi^{k+1},\A^{-1} \eta^{n+1}\rangle \notag\\
= & -\langle\sum_{k=0}^n \Delta t \A{\A^*}^{-1} \mathcal{H} (u \eta)^k,\A^{-1} \eta^{n+1}\rangle+\langle\sum_{k=0}^n \Delta t R^{1, k+1},\A^{-1} \eta^{n+1}\rangle.
\end{align}
By summing equation \eqref{eq-con-all1} from $ n=0$ to N, we obtain
\begin{align}\label{eq-com-all2}
& \sum_{n=0}^{N}\langle\eta^{n+1}, \eta^{n+1}\rangle+\sum_{n=0}^{N}\langle\phi\left(C^{n+1}\right)-\phi\left(c^{n+1}\right), \eta^{n+1}\rangle+\sum_{n=0}^{N}\langle\sum_{k=0}^n \Delta t \delta \xi^{k+1}, \A^{-1} \eta^{n+1}\rangle\notag \\
& =-\sum_{n=0}^{N}\langle\sum_{k=0}^n \Delta t \mathcal{H} (u \eta)^k, {\A^*}^{-1} \eta^{n+1}\rangle+\sum_{n=0}^{N}\langle\sum_{k=0}^n \Delta t R^{1, k+1}, \A^{-1} \eta^{n+1}\rangle.
\end{align}
We proceed to estimate the terms in equation~\eqref{eq-com-all2}, denoted by \( M_1, M_2, \ldots, M_5 \). Define $\beta^n=\sum_{k=0}^n\left\|\eta^{k+1}\right\|^2$, we can get
\begin{equation}
    M_1=\sum_{n=0}^{N}\langle\eta^{n+1}, \eta^{n+1}\rangle=\sum_{n=0}^{N}\left\|\eta^{n+1}\right\|^2=\beta^N.\label{eq-con-left1}
\end{equation}
Due to the monotonicity of $\phi$, it follows that
\begin{equation}\label{eq-con-g2}
M_2=\sum_{n=0}^{N}\langle\phi\left(C^{n+1}\right)-\phi\left(c^{n+1}\right), C^{n+1}-c^{n+1}\rangle \geq 0.
\end{equation}
We define $\rho^n=\frac{\Delta t}{2 D^*}\3{\sum_{k=0}^n \xi^{k+1}}^2 $.
Using the fundamental equality
\begin{equation}\label{eq-inequality}
2 \sum_{n=1}^N\left(\frac{1}{D} a^n, \sum_{k=1}^n a^k\right)=\left(\sum_{n=1}^N a^n, \frac{1}{D} \sum_{n=1}^N a^n\right)+\sum_{n=1}^N\left(a^n, \frac{1}{D} a^n\right)
\end{equation}
along with \eqref{eq-con22}, Lemmas~\ref{lem-3.2} and~\ref{lem-3.3},  
we derive the following estimate for \(M_3\):
\begin{align}\label{eq-con-g3}
M_3=&\sum_{n=0}^N\langle\sum_{k=0}^n \Delta t \delta \xi^{k+1}, \mathcal{A}^{-1} \eta^{n+1}\rangle\notag\\
=&\Delta t \sum_{n=0}^{N}\left(\sum_{k=0}^n \xi^{k+1}, \frac{1}{D} \xi^{n+1}\right)-\Delta t \sum_{n=0}^{N}\left(\sum_{k=0}^n \xi^{k+1}, \A^{-1} R^{2, n+1}\right)\notag\\
\geq& \rho^N+\frac{\Delta t}{2 D^*} \sum_{n=0}^{N}\3{\xi^{n+1}}^2-\left|\sum_{n=0}^{N}\left(\Delta t \sum_{k=0}^n \xi^{k+1},\A^{-1} R^{2, n+1}\right)\right|\notag\\
\geq& \rho^N+\frac{\Delta t}{2 D^*} \sum_{n=0}^{N}\3{\xi^{n+1}}^2-\Delta t \sum_{n=0}^{N} D^* \rho^n-\frac{1}{2R_a^2} \sum_{n=0}^{N}\3{R^{2, n+1}}^2.
\end{align}
Using equation~\eqref{eq-con22} and the inequalities
\( |\alpha \beta| \leq \varepsilon \alpha^2 + \frac{1}{4\varepsilon} \beta^2 \) and  
\( \left( \sum_{i=1}^n a_i \right)^2 \leq n \sum_{i=1}^n a_i^2 \)
together with Lemmas \ref{lem-3.2}-\ref{lem-3.4}, we obtain the following estimate
\begin{align}\label{eq-con-g4}
 |M_4|=  \left |\sum_{n=0}^{N} \langle\sum_{k=0}^n \Delta t \mathcal{H} (u \eta)^k,{\A^*}^{-1} \eta^{n+1}\rangle\right|\leq\frac{L_u^2R_dT\Delta t}{4\varepsilon h^2} \sum_{n=0}^{N}\beta^n+\frac{\varepsilon}{R_a^2}\sum_{n=0}^{N}\n{\eta^{n+1}}^2,
\end{align}
\begin{align}\label{eq-con-g5}
 M_5=  \left |\sum_{n=0}^{N}\langle\sum_{k=0}^n \Delta t R^{1, k+1}, \A^{-1} \eta^{n+1}\rangle \right|\leq \frac{T^2}{4\varepsilon}\sum_{n=0}^{N}\n{R^{1, n+1}}^2 +\frac{\varepsilon}{R_a^2}\sum_{n=0}^{N}\n{\eta^{n+1}}^2.
\end{align}
By substituting the estimates \eqref{eq-con-left1}-\eqref{eq-con-g2} and \eqref{eq-con-g3}-\eqref{eq-con-g5} into  \eqref{eq-com-all2}, we get
\begin{align}\label{eq-con-last}
 (1-\frac{2 \varepsilon}{R_a^2}) \beta^N+\rho^N \leq \frac{L_u^2R_dT}{4\varepsilon h^2} \sum_{n=0}^{N}\Delta t\beta^n +D^*\sum_{n=0}^{N}\Delta t   \rho^n
+\frac{T^2}{4\varepsilon}\sum_{n=0}^{N}\n{R^{1, n+1}}^2+\frac{1}{2R_a^2} \sum_{n=0}^{N}\3{R^{2, n+1}}^2.
\end{align}
Ensuring that $\varepsilon$ fulfills the inequality $1-\frac{2\varepsilon}{R_a^2}>0$, and applying Gronwall’s inequality to \eqref{eq-con-last}, we obtain
\begin{equation}\label{eq-last}
\beta^N\leq K \sum_{n=0}^{N}(\n{R^{1, n+1}}^2+\3{R^{2, n+1}}^2) .
\end{equation}
By multiplying \eqref{eq-last} by $\Delta t$, we obtain
\begin{equation}
\sum_{n=0}^{N} \Delta t\left\|\eta^{n+1}\right\|^2 \leq K \sum_{n=0}^{N} \Delta t\left(\left\|R^{1, n+1}\right\|^2+\3{R^{2, n+1}}^2\right) \leq K\left(h^{2s}+\Delta t^2\right).
\end{equation}
It completes the proof.
\end{proof}
\section{Numerical experiments}
This section evaluates the numerical formats for the contaminant transport equation with adsorption, starting with periodic and extending to Dirichlet boundary conditions. For cases with the nonlinear term $ \phi(c)$, Newton's method is applied at each time step.
\par
The following norms are employed to measure the errors of numerical solutions
\begin{align*}
    &\varepsilon _{c,2}=\n{C-c}, \quad \varepsilon _{c,\infty}=\underset{i}{\mathrm{max}}\left | C_i-c_i \right |,\\
    &\varepsilon _{z ,2}=\3{Z-z}, \quad \varepsilon_{z,\infty}=\underset{i}{\mathrm{max}}\left | Z_{i+\f}-z_{i+\f} \right |.
\end{align*}
In the case of periodic boundary conditions, the block-centered numerical schemes demonstrate mass conservation, with mass errors quantified by
\begin{align*}
\varepsilon _{Mass}= \left| \sum_{i=1}^{J} C_{i}^{n+1}h+\sum_{i=1}^{J} \Phi_{i}^{n+1}h-\sum_{i=1}^{J}C_{i}^{0}h-\sum_{i=1}^{J}\Phi_{i}^{0}h-\sum_{k=0}^{n} \sum_{i=1}^{J}
\Delta t(f^{k+1}_{i}h )\right |. 
\end{align*}
\subsection{Examples with periodic boundary condition}
To examine the effects of different adsorption models, we first consider the contaminant transport problem with adsorption described by the Langmuir isotherm.
\begin{example}
Consider a contaminant transport equation with adsorption on $[0, 2\pi]$ with periodic boundary conditions. The velocity field is $u(x) = \sin(2x)$, and the diffusion coefficient is $D(x) = 0.1(\cos(2x) + 2)$. The nonlinear adsorption term is $\phi(c) = \frac{5c}{1 + 6c}$. The exact solution is $c(x, t) = \exp(-t) \frac{\sin(2x) + 1}{2}$, and the source term $f(x, t)$ is derived accordingly.
\end{example}
In the numerical experiments, the proposed HOS1–HOS4 schemes are used for spatial discretization, with time integration mainly based on the forward Euler method to align with the theoretical analysis. To fully leverage the high spatial accuracy of HOS3 and HOS4 and substantially enhance computational efficiency, the Crank–Nicolson method is additionally employed for time stepping in these schemes:
\begin{align*}
\partial_t^{CN} v_i^{n+\f}=\frac{v_i^{n+1}-v_i^{n}}{\Delta t},\quad v_i^{n+\f}=\frac{v_i^{n+1}+v_i^{n}}{2}. 
\end{align*} 
Using the Crank-Nicolson (CN), we derive the following block-centered scheme:
\begin{align*}
&\A(m,a_2) \partial_t^{CN}  C_{i}^{n+\f}+\A(m,a_2) \partial_{t}^{CN}\Phi_{i}^{n+\f}+\delta(m) Z_{i}^{n+\f}   \\
&= -\A(m,a_2)\A(m^*,a_2^*)^{-1}\mathcal{H} (m^*)(u C)_{i}^{n+\f}+\A(m,a_2) f_{i}^{n+\f},\quad i= 1,2, \cdots, J,  \\
&\delta(m) C_{i+1 / 2}^{n+1}+\A(m,a_2)\left(\frac{Z}{D}\right)_{i+1 / 2}^{n+1}  = 0,  \quad i= 0,1, \cdots, J-1.
\end{align*}
\par
The spatial accuracy test results of the numerical schemes HOS1--HOS4 are presented in Tables~\ref{tab:1}--\ref{tab:2}. The results confirm that the schemes converge at their respective theoretical orders, with HOS1 and HOS2 achieving fourth-order accuracy, HOS3 achieving sixth-order accuracy, and HOS4 achieving eighth-order accuracy as the grid is refined.
\begin{table} [!h]
\caption{Convergence and Error Rates of $c$ and $z$ in 4th-Order Schemes  with $\Delta t = h^4$ (Euler  Method).}
\label{tab:1}       
 \centering
\resizebox{0.8\textwidth}{!}{ 
\renewcommand{\arraystretch}{1.3}  
\begin{tabular}{cccccccccc}
\hline\noalign{\smallskip}
 & $J$ & $ \varepsilon _{c,\infty}$ & Rate & $\varepsilon _{c,2}$ & Rate & $\varepsilon _{z,\infty} $ & Rate & $\varepsilon _{z,2}$ & Rate \\
\noalign{\smallskip}\hline\noalign{\smallskip}
\multirow{4}{*}{HOS1} & 15 & 0.0204     & -      & 0.0290     & -      & 0.0110     & -      & 0.0200     & -      \\
                     & 20 & 7.3588e-03 & 3.5430 & 1.0082e-02 & 3.6695 & 3.5304e-03 & 3.9435 & 6.8604e-03 & 3.7157 \\
                     & 30 & 1.5883e-03 & 3.7815 & 2.1583e-03 & 3.8017 & 7.8868e-04 & 3.6965 & 1.4642e-03 & 3.8092 \\
                     & 40 & 5.1444e-04 & 3.9186 & 7.0239e-04 & 3.9021 & 2.5135e-04 & 3.9750 & 4.7744e-04 & 3.8953 \\ \noalign{\smallskip}\hline\noalign{\smallskip}
\multirow{4}{*}{HOS2} & 15 & 0.0120     & -      & 0.0144     & -      & 0.0038     & -      & 5.1740e-03 & -      \\
                     & 20 & 4.0929e-03 & 3.7501 & 4.5690e-03 & 3.9821 & 1.0140e-03 & 4.6142 & 1.5100e-03 & 4.2809 \\
                     & 30 & 8.2414e-04 & 3.9527 & 9.1276e-04 & 3.9721 & 1.8356e-04 & 4.2152 & 2.9500e-04 & 4.0272 \\
                     & 40 & 2.7216e-04 & 3.8513 & 2.9018e-04 & 3.9835 & 6.1066e-05 & 3.8256 & 9.3521e-05 & 3.9932 \\
\noalign{\smallskip}\hline
\end{tabular}
}
\end{table}

\begin{table} [!h]
\caption{Error and Convergence of $c$ and $z$ in 6th/8th-Order Schemes with $\Delta t = h^3/h^4$ (Crank Method).}
\label{tab:2}       
\centering
\resizebox{0.8\textwidth}{!}{ 
\renewcommand{\arraystretch}{1.3}  
\begin{tabular}{cccccccccc}
\hline\noalign{\smallskip}
 & $J$ & $ \varepsilon _{c,\infty}$ & Rate & $\varepsilon _{c,2}$ & Rate & $\varepsilon _{z,\infty}$ & Rate & $\varepsilon _{z,2} $ & Rate \\ \noalign{\smallskip}\hline\noalign{\smallskip}
\multirow{4}{*}{HOS3} & 15 & 1.2333e-03 & - & 1.8831e-03 & - & 8.1225e-04 & - & 1.3707e-03 & - \\
 & 20 & 1.9516e-04 & 6.4085 & 2.6868e-04 & 6.7685 & 1.1721e-04 & 6.7291 & 1.8785e-04 & 6.9085 \\
 & 25 & 4.6628e-05 & 6.4157 & 6.3831e-05 & 6.4411 & 2.8498e-05 & 6.3374 & 4.4149e-05 & 6.4895 \\
 & 30 & 1.4609e-05 & 6.3656 & 2.0043e-05 & 6.3534 & 9.8735e-06 & 5.8137 & 1.3850e-05 & 6.3586 \\\noalign{\smallskip}\hline\noalign{\smallskip}
\multirow{4}{*}{HOS4} & 15 & 3.3039e-04 & - & 5.0004e-04 & - & 2.2142e-04 & - & 3.5947e-04 & - \\
 & 20 & 2.8110e-05 & 8.5655 & 3.9951e-05 & 8.7842 & 1.8008e-05 & 8.7224 & 2.7770e-05 & 8.9011 \\
 & 25 & 4.4378e-06 & 8.2726 & 5.9895e-06 & 8.5041 & 2.7077e-06 & 8.4908 & 4.1269e-06 & 8.5435 \\
 & 30 & 9.0865e-07 & 8.6987 & 1.3075e-06 & 8.3475 & 6.4654e-07 & 7.8555 & 9.0106e-07 & 8.3463 \\ 
\noalign{\smallskip}\hline
\end{tabular}
}
\end{table}
Furthermore, Table \ref{tab:3} presents the mass errors of  $c$ at various times when $J=30$. It is evident that the mass errors in the scenarios listed reach 
$10^{-15}$, indicating the property of mass conservation and aligning with the theoretical results.
\begin{table}[!h]
\caption{Mass Error $\varepsilon _{Mass}$ of Numerical Solution $C$(Euler  Method).}
\label{tab:3} 
\centering
\renewcommand{\arraystretch}{1.3}  
\scalebox{0.8}{
\begin{tabularx}{1\textwidth}{XXXXXX}
\hline\noalign{\smallskip}
 &  & $t = 0.2$ & $t = 0.4$ & $t = 0.6$ & $t = 0.8$ \\ \noalign{\smallskip}\hline\noalign{\smallskip}
\multirow{2}{*}{$\Delta t=1/200$} & HOS1 & 6.6613e-15 & 8.8818e-15 & 1.6431e-14 & 2.2204e-14 \\
                                  & HOS2 & 2.2204e-15 & 5.3291e-15 & 2.2204e-15 & 1.3323e-15 \\ \noalign{\smallskip}\hline\noalign{\smallskip}
$\Delta t=1/350$                  & HOS3 & 7.7716e-16 & 6.6613e-16 & 3.7748e-15 & 9.3259e-15 \\ \noalign{\smallskip}\hline\noalign{\smallskip}
$\Delta t=1/500$    & HSO4 & 1.9984e-15 & 7.1054e-15 & 8.4377e-15 & 1.3323e-14\\ \noalign{\smallskip}\hline
\end{tabularx}
}
\end{table}
To investigate the influence of parameter variation on numerical accuracy, we adopt a parameterization strategy guided by the stage truncation error expressions~\eqref{eq-opp} and~\eqref{eq-op}. Specifically, we set
 \(a_2^* = \frac{{m^*}^2}{72} - \frac{1}{30}\) and \(a_2= \frac{{m}^2}{288} - \frac{3}{640}\) so that the resulting schemes achieve sixth-order accuracy in space. This setting ensures structural consistency of the discretization and effectively reduces the degrees of freedom to two essential parameters, \(m^*\) and \(m\), enabling a focused analysis of their impact on the accuracy of the solution and flux.
Time discretization uses the Crank method with \(J = 20\) and \(\Delta t = h^3\). We treat \(m\) and \(m^*\) as variables and examine how the \(L^2\)-norm errors of solution and flux vary. As shown in Figure~\ref{fig_4}, both errors exhibit similar trends with respect to \(m\) and \(m^*\), reaching minima at \(m ^*= 9\sqrt{\frac{3}{119}}\), \(m = 2\sqrt{\frac{6}{7}}\), 
matching the theoretical HOS4 scheme, which corresponds to the highest accuracy theoretically achievable within the parameterized discretization framework. This observation indicates the consistency between theory and numerical results.
\begin{figure}[!h]
\centering 
\includegraphics[width=0.8\textwidth]{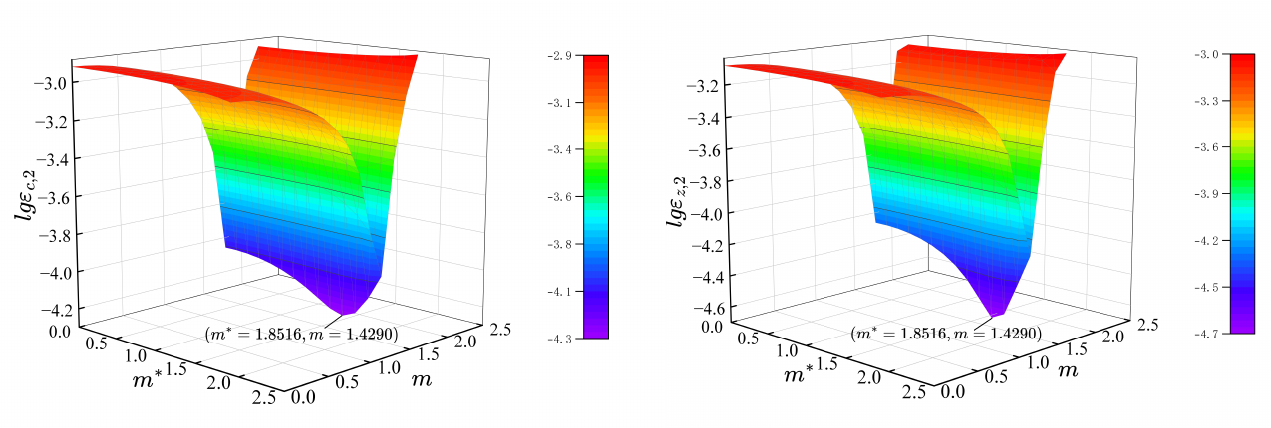}
\caption{Trend of the $L^2$ norm error with parameter.}
\label{fig_4}       
\end{figure}
\par
Next, to further explore the influence of adsorption behavior, we consider the contaminant transport problem governed by the Freundlich isotherm.
\begin{example}
We consider a nonlinear contaminant transport equation with adsorption on \( x \in [0,\pi] \), with the exact solution \( c = 3^{\cos(2x + t) - 1} \). The velocity and the diffusion coefficients are \( u(x) = \cos(2x) \) and \( D(x) = \frac{1}{2} \sin(2x) + 1 \), respectively. The nonlinear adsorption is \( \phi(c) = c^{p} \) with \( p = \frac{1}{3} \). The source term \( f(x,t) \) follows from the exact solution. Results are evaluated at \( T = 1 \).
\end{example}
Numerical results confirm the convergence of schemes HOS1–HOS4. As shown in Tables~\ref{tab:4}–\ref{tab:5}, they achieve fourth-, sixth-, and eighth-order accuracy with grid refinement. Table~\ref{tab:6} reports mass errors of \( c \) for \( J = 20 \), on the order of \(10^{-15}\), verifying mass conservation.

Using the same schemes and parameters as before (\(a_2^* = \frac{{m^*}^2}{72} - \frac{1}{30}\), \(a_2= \frac{{m}^2}{288} - \frac{3}{640}\) ), time is discretized by the Crank method with \(J = 15\), \(\Delta t = h^3\). As shown in Figure~\ref{fig_5}, the \(L^2\)-norm errors of the solution and flux reach minima at \(m^* = 9\sqrt{\frac{3}{119}}\) and \(m = 2\sqrt{\frac{6}{7}}\), matching HOS4 theory.

\begin{table} [!h]
\caption{Convergence and Error Rates of $c$ and $z$ in 4th-Order Schemes with $\Delta t = h^4$ (Euler  Method).}
\label{tab:4}       
\centering
\resizebox{0.8\textwidth}{!}{ 
\renewcommand{\arraystretch}{1.3}  
\begin{tabular}{cccccccccc}
\hline\noalign{\smallskip}
 & J & $ \varepsilon _{c,\infty}$ & Rate & $\varepsilon _{c,2}$ & Rate & $\varepsilon _{z,\infty} $ & Rate & $\varepsilon _{z,2}$ & Rate \\ \noalign{\smallskip}\hline\noalign{\smallskip}
\multirow{4}{*}{HOS1} & 15 & 2.3613e-03 & -      & 1.7718e-03 & -      & 6.2628e-03 & -                          & 4.7487e-03 & -      \\
                     & 20 & 8.0406e-04 & 3.7447 & 5.7925e-04 & 3.8862 & 1.9677e-03 & 4.0244                     & 1.5723e-03 & 3.8421 \\
                     & 30 & 1.7577e-04 & 3.7500 & 1.1743e-04 & 3.9360 & 4.2238e-04 & 3.7950                     & 3.2161e-04 & 3.9140 \\
                     & 40 & 5.6512e-05 & 3.9443 & 3.7517e-05 & 3.9664 & 1.3759e-04 & 3.8988                     & 1.0307e-04 & 3.9556 \\ \noalign{\smallskip}\hline\noalign{\smallskip}
\multirow{4}{*}{HOS2} & 15 & 4.9165e-04 & -      & 4.9563e-04 & -      & 2.5989e-03 & -                          & 2.1828e-03 & -      \\
                     & 20 & 1.5526e-04 & 4.0067 & 1.5621e-04 & 4.0136 & 9.7602e-04 & \multicolumn{1}{l}{3.4043} & 6.6856e-04 & 4.1130 \\
                     & 30 & 3.3146e-05 & 3.8084 & 3.0751e-05 & 4.0084 & 1.7970e-04 & \multicolumn{1}{l}{4.1734} & 1.2863e-04 & 4.0648 \\
                     & 40 & 1.0371e-05 & 4.0389 & 9.7167e-06 & 4.0046 & 5.8331e-05 & \multicolumn{1}{l}{3.9112} & 4.0309e-05 & 4.0336 \\ 
                     \noalign{\smallskip}\hline
\end{tabular}
}
\end{table}

\begin{table} [!h]
\caption{Error and Convergence of $c$ and $z$ in 6th/8th-Order Schemes with $\Delta t = h^3/h^4$ (Crank Method).}
\label{tab:5}       
\centering
\resizebox{0.8\textwidth}{!}{ 
\renewcommand{\arraystretch}{1.3}  
\begin{tabular}{cccccccccc}
\hline\noalign{\smallskip}
 & $J$ & $ \varepsilon _{c,\infty}$ & Rate & $\varepsilon _{c,2}$ & Rate & $\varepsilon _{z,\infty}$ & Rate & $\varepsilon _{z,2} $ & Rate \\ \noalign{\smallskip}\hline\noalign{\smallskip}
\multirow{4}{*}{HOS3} & 15 & 9.9351e-05 & - & 5.5122e-05 & - & 3.9859e-04 & - & 3.1609e-04 & - \\
 & 20 & 1.5950e-05 & 6.3584 & 8.7796e-06 & 6.3859 & 8.0259e-05 & 5.5710 & 4.9766e-05 & 6.4261 \\
 & 25 & 3.9811e-06 & 6.2198 & 2.1964e-06 & 6.2096 & 1.8953e-05 & 6.4681 & 1.2319e-05 & 6.2568 \\
 & 30 & 1.2969e-06 & 6.1518 & 7.1657e-07 & 6.1435 & 6.1578e-06 & 6.1662 & 3.9956e-06 & 6.1758 \\ \noalign{\smallskip}\hline\noalign{\smallskip}
\multirow{4}{*}{HOS4} & 15 & 1.5183e-05 & - & 1.0574e-05 & - & 4.4186e-05 & - & 3.0921e-05 & - \\
 & 20 & 1.3204e-06 & 8.4893 & 8.2063e-07 & 8.8849 & 3.7492e-06 & 8.5749 & 2.3417e-06 & 8.9702 \\
 & 25 & 1.7973e-07 & 8.9369 & 1.2478e-07 & 8.4409 & 5.9376e-07 & 8.2585 & 3.6709e-07 & 8.3042 \\
 & 30 & 4.4044e-08 & 7.7133 & 2.7629e-08 & 8.2694 & 1.2765e-07 & 8.4313 & 8.2737e-08 & 8.1720 \\ 
 \noalign{\smallskip}\hline
\end{tabular}}
\end{table}
\begin{table}[!h]
\caption{Mass Error $\varepsilon _{Mass}$ of Numerical Solution $C$ (Euler  Method).}
\label{tab:6} 
\centering
\renewcommand{\arraystretch}{1.3}  
\scalebox{0.8}{
\begin{tabularx}{1\textwidth}{XXXXXX}
\hline\noalign{\smallskip}
                                  &     & $ t=0.2$   & $t=0.4$    & $t=0.6$    & $t=0.8$    \\ \noalign{\smallskip}\hline\noalign{\smallskip}
$\Delta t=1/500$      & HOS1 & 4.4855e-16 &3.1050e-15 & 4.8800e-15
& 5.7827e-15\\ \noalign{\smallskip}\hline\noalign{\smallskip}
$\Delta t=1/750$    & HOS2 & 3.1096e-15 & 1.3384e-15 & 8.8990e-16 & 4.5497e-16\\ \noalign{\smallskip}\hline\noalign{\smallskip}
$\Delta t=1/500$     & HOS3 & 1.3367e-15 & 3.1050e-15 & 4.4359e-15 & 2.2299e-15 \\ \noalign{\smallskip}\hline\noalign{\smallskip}
$\Delta t=1/1000$     & HOS4 & 5.3254e-15 & 7.5502e-15 & 7.1036e-15 & 1.1981e-14\\ \noalign{\smallskip}\hline
\end{tabularx}
}
\end{table}

\begin{figure}[!h]
\centering 
  \includegraphics[width=0.8\textwidth]{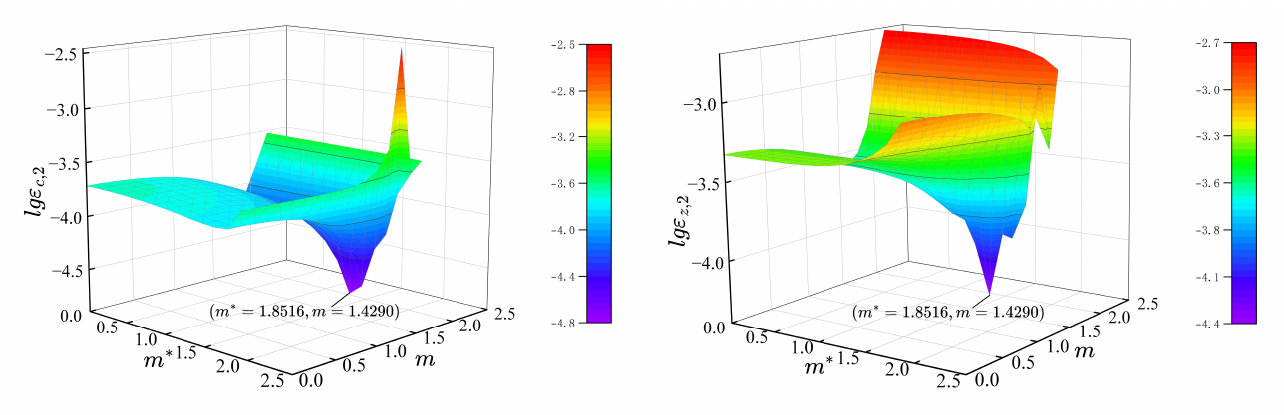}
\caption{Trend of the $L^2$ norm error with parameter.}
\label{fig_5}       
\end{figure}

\subsection{Examples with Dirichlet boundary condition}
To extend the analysis to Dirichlet boundary conditions, we begin with a contaminant transport problem featuring linear adsorption.
\begin{example}
We consider a Dirichlet problem with parameters from: the constant velocity \(u = 0.15\), the diffusion \(D = 0.135\), and a linear adsorption \(\phi(c) = K_dc\), \(K_d = 0.7\). In domain \([0,4]\), the exact solution \(c(x,t) = e^{t}\cos^{2}(x)\) determines \(f(x,t)\) and the boundary data. Results are at \(T = 1\).
\end{example}
In the numerical experiments, the time step is set as \(\Delta t = h^4\). Table~\ref{tab:linear} presents the spatial accuracy results for HOS1-D and HOS2-D, both achieving fourth-order convergence for solution and flux under mesh refinement, fully consistent with theoretical analysis.
\begin{table} [!h]
\caption{Error and Convergence of $c$ and $z$ with $\Delta t = h^4$ (Euler  Method)}
\label{tab:linear}       
\centering
\resizebox{0.8\textwidth}{!}{ 
\renewcommand{\arraystretch}{1.3}  
\begin{tabular}{cccccccccc}
\hline\noalign{\smallskip}
 & $J$ & $ \varepsilon _{c,\infty}$ & Rate & $\varepsilon _{c,2}$ & Rate & $\varepsilon _{z,\infty}$ & Rate & $\varepsilon _{z,2} $ & Rate \\ \noalign{\smallskip}\hline\noalign{\smallskip}
\multirow{4}{*}{HOS1-D} & 10 & 2.2590e-02 & - & 2.7665e-02 & - & 6.8122e-03 & - & 5.7897e-03 & - \\
 & 15 & 4.4126e-03 & 4.0276 & 5.1533e-03 & 4.1447 & 1.6302e-03 & 3.5269 & 1.1868e-03 & 3.9087 \\
 & 20 & 1.3908e-03 & 4.0133 & 1.6046e-03 & 4.0558 & 5.3730e-04 & 3.8580 & 3.7571e-04 & 3.9981 \\
 & 30 & 2.7541e-04 & 3.9939 & 3.1384e-04 & 4.0243 & 1.0445e-04 & 4.0395 & 7.2746e-05 & 4.0492 \\ \noalign{\smallskip}\hline\noalign{\smallskip}
\multirow{4}{*}{HOS2-D} & 10 & 2.1071e-02 & - & 2.7986e-02 & - & 8.4376e-03 & - & 6.5138e-03 & - \\
 & 15 & 4.1377e-03 & 4.0145 & 5.1060e-03 & 4.1959 & 1.7969e-03 & 3.8145 & 1.3046e-03 & 3.9658 \\
 & 20 & 1.3075e-03 & 4.0045 & 1.5750e-03 & 4.0884 & 5.6382e-04 & 4.0290 & 4.1021e-04 & 4.0219 \\
 & 30 & 2.5779e-04 & 4.0047 & 3.0543e-04 & 4.0454 & 1.0964e-04 & 4.0386 & 7.8850e-05 & 4.0672  \\ \noalign{\smallskip}\hline
\end{tabular}}
\end{table}
\par
Next, we consider the Freundlich isotherm case, where \(\phi'(c)\) tends to infinity as \(c \to 0\), resulting in a degenerate adsorption problem.
\begin{example}
Consider a Dirichlet problem on \([-3,3]\), with the analytical solution \(c(x,t) = exp(-t)\tanh^2(2x)\). Set \(u = x\), \(D = x^2 + 1\), and \(\phi(c) = c^p\) with \(p = \frac{1}{3}\). The source term \(f(x,t)\) and boundary conditions follow directly from the analytical solution. Results are evaluated at the terminal time \(T = 1\).
\end{example}
\par
Since \( {\phi}' (0)=\infty \), the problem is degenerate. Thus, we approximate \( \phi(c) \) by a regularized function \( \phi_\varepsilon(c) \) with bounded slope \cite{barrett1997finite,radu2010newton}, defined as
\begin{equation}
\phi_{\varepsilon}(c) = 
\begin{cases}
\phi(c), & c > \varepsilon, \\
p \varepsilon^{p-1}c + (1-p)\varepsilon^p, & c \in [0, \varepsilon],
\end{cases}
\end{equation}
The regularization parameter is set to \(\varepsilon = 10^{-10}\).
\par
In the numerical experiments with \(\Delta t = h^4\), Table~\ref{tab:END2} shows that both HOS1-D and HOS2-D achieve fourth-order convergence for solution and flux under mesh refinement, consistent with theoretical analysis. Both schemes demonstrate robust performance when applied to this degenerate problem.
\begin{table} [!h]
\caption{Error and Convergence Rates of $ c$ and $z $ with $\Delta t=h^4$ (Euler  Method)}
\label{tab:END2}       
\centering
\resizebox{0.8\textwidth}{!}{ 
\renewcommand{\arraystretch}{1.3}  
\begin{tabular}{cccccccccc}
\hline
 & J & $ \varepsilon _{c,\infty}$ & Rate & $\varepsilon _{c,2}$ & Rate & $\varepsilon _{z,\infty} $ & Rate & $\varepsilon _{z,2}$ & Rate \\ \noalign{\smallskip}\hline\noalign{\smallskip}
\multirow{4}{*}{HOS1-D} & 30 & 1.4512e-03 & -      & 7.5479e-04 & -      & 3.2275e-03 & -      & 2.7762e-03 & -      \\
                        & 40 & 4.4773e-04 & 4.0876 & 2.3130e-04 & 4.1113 & 1.2092e-03 & 3.4127 & 8.8206e-04 & 3.9856 \\
                        & 50 & 1.7951e-04 & 4.0958 & 9.2360e-05 & 4.1140 & 5.4598e-04 & 3.5631 & 3.6264e-04 & 3.9834 \\
                        & 60 & 8.4775e-05 & 4.1150 & 4.3593e-05 & 4.1180 & 2.6082e-04 & 4.0519 & 1.7500e-04 & 3.9962 \\ \noalign{\smallskip}\hline\noalign{\smallskip}
\multirow{4}{*}{HOS2-D} & 30 & 6.5120e-04 & -      & 7.3638e-04 & -      & 1.3385e-03 & -      & 1.9580e-03 & -      \\
                        & 40 & 2.1917e-04 & 3.7853 & 2.5735e-04 & 3.6544 & 4.5766e-04 & 3.7303 & 6.7153e-04 & 3.7197 \\
                        & 50 & 9.4190e-05 & 3.7847 & 1.0834e-04 & 3.8772 & 1.9206e-04 & 3.8913 & 2.8137e-04 & 3.8983 \\
                        & 60 & 4.6044e-05 & 3.9256 & 5.2759e-05 & 3.9466 & 9.3573e-05 & 3.9440 & 1.3678e-04 & 3.9561 \\\noalign{\smallskip}\hline
\end{tabular}%
}
\end{table}
\par
Next, we consider another adsorption model, the Langmuir isotherm, to study the contaminant transport problem.
\begin{example}
We consider a numerical problem with Dirichlet boundary conditions on $[0,6]$, with velocity $u = x$, diffusion $D = x/10$, and adsorption $\phi(c) = \frac{c}{1+c}$. The exact solution $c = \exp(-t) \sin^2(x)$ determines $f(x,t)$ and the boundary conditions. All results correspond to the final time $T = 1$.
\end{example}
Numerical results in Table \ref{tab:END3} demonstrate that with $\Delta t = h^4$, both HOS1-D and HOS2-D schemes achieve fourth-order convergence for solution and flux under grid refinement.
\begin{table} [!h]
\caption{Error and Convergence Rates of $ c$ and $z $ with $\Delta t=h^4$ (Euler  Method)}
\label{tab:END3}       
\centering
\resizebox{0.8\textwidth}{!}{ 
\renewcommand{\arraystretch}{1.3}  
\begin{tabular}{cccccccccc}
\hline\noalign{\smallskip}
 & $J$ & $ \varepsilon _{c,\infty}$ & Rate & $\varepsilon _{c,2}$ & Rate & $\varepsilon _{z,\infty}$ & Rate & $\varepsilon _{z,2} $ & Rate \\ \noalign{\smallskip}\hline\noalign{\smallskip}
\multirow{4}{*}{HOS1-D} & 10 & 7.5386e-02 & - & 8.4771e-02 & - & 3.1093e-02 & - & 3.5201e-02 & - \\
 & 20 & 2.0080e-03 & 5.2305 & 2.9878e-03 & 4.8264 & 2.1986e-03 & 3.8219 & 1.8707e-03 & 4.2340 \\
 & 25 & 8.2933e-04 & 3.9628 & 1.2654e-03 & 3.8503 & 8.5935e-04 & 4.2100 & 7.4921e-04 & 4.1006 \\
 & 30 & 4.1171e-04 & 3.8410 & 6.2381e-04 & 3.8792 & 3.8275e-04 & 4.4361 & 3.5213e-04 & 4.1412 \\ \noalign{\smallskip}\hline\noalign{\smallskip}
\multirow{4}{*}{HOS2-D} & 10 & 5.8538e-02 & - & 7.3733e-02 & - & 4.0306e-02 & - & 5.0032e-02 & - \\
 & 20 & 2.8440e-03 & 4.3634 & 3.9571e-03 & 4.2198 & 1.5021e-03 & 4.7460 & 1.9309e-03 & 4.6955 \\
 & 25 & 1.0050e-03 & 4.6618 & 1.5646e-03 & 4.1582 & 6.3435e-04 & 3.8630 & 7.9308e-04 & 3.9878 \\
 & 30 & 4.6267e-04 & 4.2546 & 7.3910e-04 & 4.1133 & 3.0795e-04 & 3.9636 & 3.8468e-04 & 3.9684 \\ \noalign{\smallskip}\hline
\end{tabular}}
\end{table}
\par
As the final example, we consider a realistic application that demonstrates the practical effectiveness of the proposed numerical schemes.
\begin{example}
Pb migration is simulated in 5, 10, and 15-m soil columns over 1800 days. The initial Pb concentration is set to zero, assuming Langmuir adsorption:
 \( \phi(c) = K_{L} \frac{S_{m} c}{1 + K_{L} c} \).
Boundary conditions specify a constant inlet concentration of 100~mg/L and a zero-flux condition at the outlet. Model parameters are taken from~\cite{chetti2024simulating} (see Table~\ref{tab:ee}). 
\begin{table}[h!]
\caption{Simulation Parameters}
 \renewcommand{\arraystretch}{1.5}  
\label{tab:ee}
\centering
\scalebox{0.8}{  
\begin{tabular}{cccccc}
\hline\noalign{\smallskip}
$\rho\,(kg/m^3)$ & $u\,(m/day)$ & $D\,(m^2/day)$ & $n$ & $K_L\,(l/g)$ & $S_m\,(mg/g)$ \\ \noalign{\smallskip}\hline\noalign{\smallskip}
1500             & 0.012        & 0.17477        & 0.3 & 2.6          & $3\times 10^{-4}$ \\ \noalign{\smallskip}\hline
\end{tabular}
}
\end{table}
\end{example}
\begin{figure}[!h]
\centering 
\includegraphics[width=0.75\textwidth]{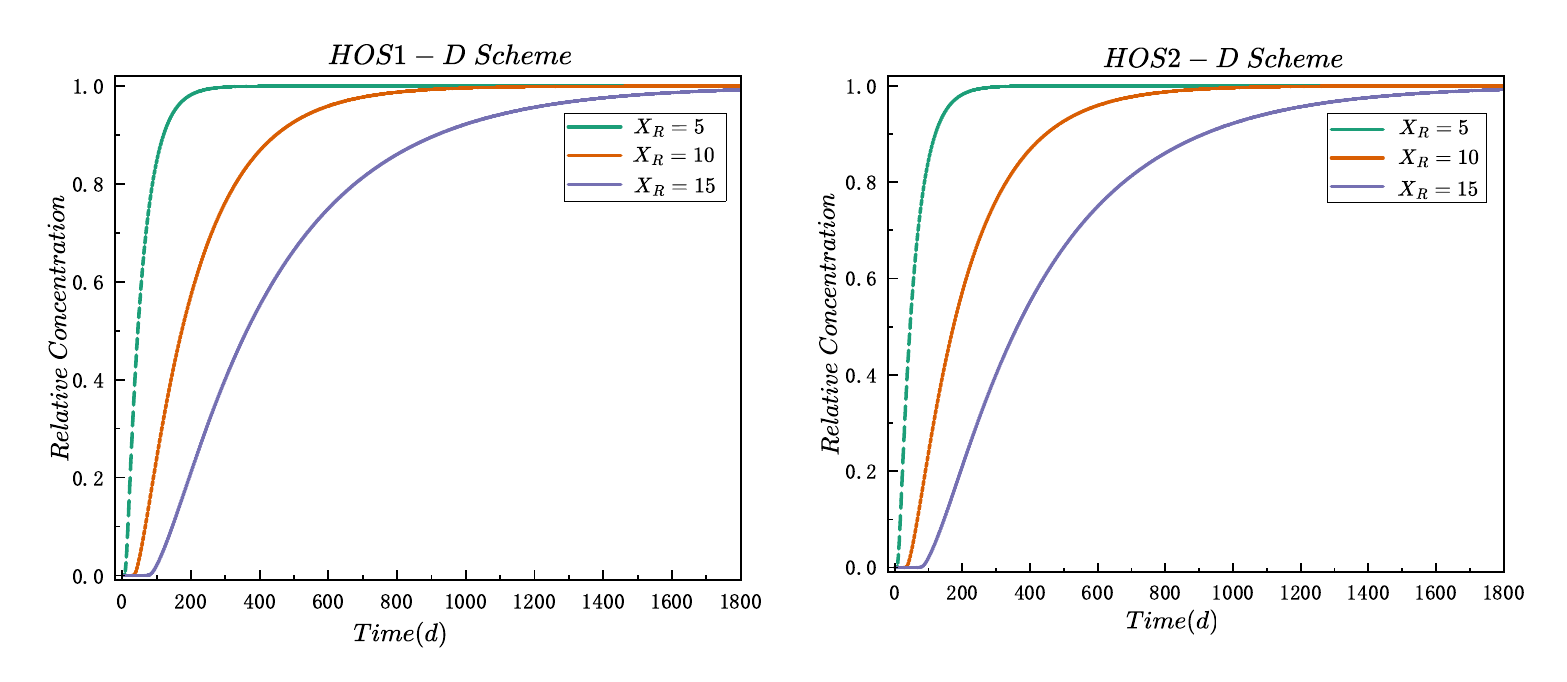}
\caption{Temporal evolution of relative lead concentration \((C/C_0)\) in soil columns of varying lengths (HOS1-D and HOS2-D).}
\label{fig_6}       
\end{figure}
To enforce the zero-flux boundary condition in a Dirichlet-type scheme, the outlet concentration is set equal to that of the nearest interior node. This approach effectively approximates the zero-gradient condition while ensuring the consistency and stability of the scheme. The simulation is conducted with a spatial resolution of 0.5 m and a time step of 1 day.
According to the simulation results in Figure \ref{fig_6}, the relative concentration of lead ($C/C_0$) in soil columns of lengths $X_R = 5$, 10, and 15 m under both HOS1-D and HOS2-D formats exhibits the same time-dependent trend. With increasing column length, the time for Pb to reach the outlet increases, and the concentration rise slows down, indicating that a longer pathway leads to greater Pb retention and slower migration. Over time, the relative concentrations converge to the inlet value $C_0$, suggesting the system reaches a steady state under continuous feed, thereby confirming the appropriateness of the boundary conditions. The two formats yield highly consistent concentration profiles across all column lengths, demonstrating favorable numerical stability and accuracy. Overall, the results highlight the significant influence of column length on Pb transport and validate the reliability of the adopted high-order numerical schemes in modeling adsorption-driven migration.
\section{Conclusion} 
This paper develops a class of high-order conservative schemes for contaminant transport with equilibrium adsorption, based on the Integral Method with Variational Limit on block-centered grids.
The scheme incorporates four parameters, enabling fourth-, sixth-, and eighth-order formulations within a unified framework, addressing the lack of a unified construction framework for high-order schemes.
Theoretical analysis under periodic boundary conditions confirms the stability, convergence, and mass conservation of the schemes. 
Numerical experiments investigate the influence of parameter variations on computational errors and reveal the intrinsic relationship between parameters and schemes of different orders.
The results confirm that the schemes maintain mass conservation, with the convergence rates of both solutions and fluxes aligning with theoretical predictions.
To enhance the applicability of the method, two fourth-order compact boundary treatments are designed to ensure uniform accuracy near boundaries and in the interior. Numerical tests covering multiple adsorption models validate the effectiveness of the proposed approach. 
This work establishes a unified and structurally consistent framework compatible with various adsorption models, providing a solid foundation for the design and optimization of high-order numerical methods.
\section*{CRediT authorship contribution statement}
\textbf{He Liu}: Methodology, Writing -- original draft, Writing -- review \& editing, Visualization. 
\textbf{Xiongbo Zheng}: Investigation, Methodology, Project administration, Supervision, Writing -- review \& editing.
\textbf{Xiaole Li}: Investigation, Methodology, Project administration, Supervision, Writing -- review \& editing.
\textbf{Mingze Ji}: Visualization.
\section*{Declaration of competing interest}
All authors consent to this submission and declare no conflicts of interest.
\section*{Data availability}
Data will be made available on request.
\section*{Acknowledgments}
This research did not receive any specific grant from funding agencies in the public, commercial, or not-for-profit sectors.
\appendix
\section{Truncation Error }
Consider the operator \( \mathcal{A}(m, a_2)v_i := a_2 v_{i-2} + a_1 v_{i-1} + a_0 v_i + a_1 v_{i+1} + a_2 v_{i+2} \). Let \( v = w_x \), so that \( v_i = w_x(x_i) \). Then \( \mathcal{A}(m, a_2) \) approximates the composite operator \( \mathcal{H}(m)w_i := \frac{1}{h} \left( d_2 w_{i+2} + d_1 w_{i+1} - d_1 w_{i-1} - d_2 w_{i-2} \right) \).
Expanding \( v_{i\pm j} = w_x(x_i \pm jh) \) at \( x_i \) via Taylor series:
\begin{equation}\label{eq-tar}
v_{i\pm j} = \sum_{k=0}^{8} \frac{(\pm j h)^k}{k!} w^{(k+1)}(x_i).
\end{equation}
Substituting \eqref{eq-tar} into $\mathcal{A}(m, a_2)v_i$ yields:
\begin{align}
\mathcal{A}(m, a_2)v_i =& (2 a_2 + 2 a_1 + a_0) w_{i}^{(1)} +  (4 a_2 + a_1) w_{i}^{(3)} h^2 +  \left( \frac{4}{3} a_2 + \frac{1}{12} a_1 \right) w_{i}^{(5)} h^4 \notag \\
& +  \left( \frac{8}{45} a_2 + \frac{1}{360} a_1 \right) w_{i}^{(7)} h^6 + \left( \frac{4}{315} a_2 + \frac{1}{20160} a_1 \right)  w_{i}^{(9)} h^8+\mathcal{O}(h^9).\label{A-1}
\end{align}
Similarly, expanding the function inside \(\mathcal{H}(m) w_i\) at \(x_i\) by Taylor series, we have
\begin{align}
\mathcal{H}(m)w_i =  (2d_1 + 4d_2)w_{i}^{(1)}h  +  \left( \frac{1}{3} d_1  + \frac{8}{3} d_2 \right) w_{i}^{(3)}h^3 + \left( \frac{1}{60} d_1 + \frac{8}{15} d_2  \right) w_{i}^{(5)}h^5 
 +  \left( \frac{1}{2520} d_1  + \frac{16}{315} d_2 \right)   w_{i}^{(7)}h^7 +\mathcal{O}(h^9).\label{A-2}
\end{align}
Subtracting equation \eqref{A-1} from \eqref{A-2} and substituting the coefficients \(a\) and \(d\) as defined in the main text, we obtain
\begin{align*}
\mathcal{A}(m, a_2)v_i - \mathcal{H}(m)w_i 
=  \left(\frac{1}{30} + a_2 - \frac{{m}^2}{72}\right)w_{i}^{(5)}h^4 + \left(\frac{15+630a_2 - 7 {m}^2}{3780}\right) w_{i}^{(7)}h^6 
+ \left(\frac{3024a_2 + {m}^2}{241920}\right)w_{i}^{(9)}h^8 +\mathcal{O}(h^9),
\end{align*}
as shown in \eqref{eq-opp}.\\
Similarly, by performing a Taylor expansion of the function inside \(\delta(m) w_i\) at the point \(x_i\), we obtain
\begin{align}
   \delta(m)w_{i}=(2b_1 + 6b_2) w_{i}^{(1)}h  + (\frac{9}{8}b_2+\frac{1}{24})w_{i}^{(3)}h^3 +( \frac{81}{640}b_2+\frac{1}{1920}b_1 )w_{i}^{(5)}h^5 
 + h^7 (  \frac{243}{35840}b_2+\frac{1}{322560} b_1 )w_{i}^{(7)}h^7 +\mathcal{O}(h^9).\label{B-1}
\end{align}
Subtracting \eqref{A-1} from \eqref{B-1} and substituting the coefficients yields the expression appearing in the main text as \eqref{eq-op}.
\bibliographystyle{elsarticle-num}      
\bibliography{mybib}   
\end{document}